\title{On Bruhat intervals of small lengths for Weyl groups}
\author{Eugenia Akhmedova}
\begin{document}
\maketitle

\newtheorem{conj}{Conjecture}
\theoremstyle{definition}
\newtheorem{lemma}{Lemma}

Keywords: Bruhat interval, Bruhat order, Weyl groups
\begin{abstract}
The paper considers the Bruhat order on Coxeter groups in general and Weyl groups in particular.We suggest an invariant on the Bruhat order intervals that allows checking whether two Bruhat intervals are isomorphic. This invariant is constructed inductively and allows us to compute short length intervals for Coxeter groups of types A and B. The results are presented in the appendices at the end of the paper. 
\end{abstract}

\section*{Introduction}
The number of Bruhat intervals in Coxeter groups is finite, and for the first few lengths, the intervals were described up to an isomorphism by A. Hultman in \cite{Hultman} and \cite{Hultman2} using the correspondence between Bruhat intervals and cell decompositions of a 2d sphere and straightforward computations.
The main purpose of this paper consists of a description of the intervals in higher dimensions, as the Hultman's geometric method  is hard to apply due to rapidly growing with length number of nonisomorphic intervals.
We construct an invariant on subintervals in the Bruhat graphs, using their specific properties. This gives us a method of comparing two Bruhat interval, that is faster than the general algorithm for checking if two graphs are isomorphic. This construction is inductive, and thus, can be easily applied for any interval length and Weyl group.

In section \ref{basics}, we recall some basic definitions and properties concerning Coxeter and Weyl groups and the Bruhat order. Subsection \ref{overview} gives a brief overview of previous results in Bruhat interval classification.
In section \ref{results}, we construct a combinatorial invariant on Bruhat intervals and show how it can be used to classify the intervals.
We sketch the algorithm of the inductive classification of Bruhat intervals of given length based on the invariant we suggest.\footnote{https://github.com/GeanAkhmedova/Bruhat-intervals.git} At the very end of the paper we point out a curious observation on the computational results. It turns out that for $n$ up to 6 any two Bruhat intervals of length $k$ in $A_n$ are isomorphic if and only if there exists a collection of isomorphisms between their $k-1$ subintervals.

Appendices A and B provide a list of representatives of equivalency classes with respect to poset isomorphisms for intervals of lengths 5 and 6 in Weyl groups of type A (25 and 103 intervals respectively) and intervals of lengths 4 and 5 in Weyl groups of type B (17 and 143 intervals respectively).

\newtheorem{definition}{Definition}
\newtheorem{theorem}{Theorem}
\newtheorem{corollary}{Corollary}
\newtheorem{stat}{Proposition}

\section{Preliminaries}\label{basics}
In this section we recall some basic facts about Coxeter and Weyl groups and the Bruhat order on them. The proof of all the propositions below can be found in the first two sections of the book by Bjorner and Brenti \cite{Bjorner}. 
\subsection{Coxeter groups}

\begin{definition}
A Coxeter system is a pair $(W,S)$ made up of a group $W$ and its generating set $S$ such that $\forall s  \in S \ s^2 = e $ and all relations in $W$ are generated by the relations of the type $(s_1s_2)^{m(s_1,s_2)} = e$,$s_1,s_2\in S$.\\
\end{definition}

\begin{definition}
 Each element $w\in W$ of the Coxeter group can be related to two sets $T_L(w)$ and $T_R(w)$. The sets $T_L = \{t \in T \ | \ \ell(tw) < \ell(w)  \}$  and $T_R = \{t \in T \ | \ \ell(wt) < \ell(w)\}$ are called the left and right sets of associated reflections. The sets $D_L = T_L \cap S$ and $D_R = T_R \cap S$ are called sets of left and right descents, respectively.    
\end{definition}

For any $w \in W$ there exists an expression $w = s_1s_2\dots s_k, \ s_i\in S$. The minimal length of such expression is called the length of the element $w$ and is denoted by $\ell(v)$. If $s_1s_2\dots s_k$ is of minimal length, it is called a reduced word for $w$.

\begin{definition}
A reflection set of the Coxeter system $(W,S)$ is the set $T = \{wsw^{-1}|s\in S,w \in W \}$.
\end{definition}

\begin{stat}
\textbf{Strong exchange property}\\
Suppose $w \in W$, $w = s_1s_2\dots s_k$, and $t \in T$. If $\ell(tw) < \ell(w)$, then $tw = s_1s_2\dots \hat{s_i} \dots s_k$, where $\hat{s_i}$ denotes removing $s_i$ from the word.
\end{stat}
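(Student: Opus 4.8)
The plan is to pin down the single letter that gets deleted by exhibiting a distinguished family of $\ell(w)$ reflections attached to the reduced word and showing that $t$ must be one of them. Write $w = s_1 s_2 \cdots s_k$ with $k = \ell(w)$ and, for each $i$, set $t_i = s_1 \cdots s_{i-1}\, s_i\, s_{i-1} \cdots s_1$, which lies in $T$ since it is a conjugate of a generator. A direct telescoping cancellation shows
\begin{equation*}
t_i w = (s_1 \cdots s_{i-1}) s_i (s_{i-1} \cdots s_1)(s_1 \cdots s_k) = s_1 \cdots s_{i-1} s_{i+1} \cdots s_k = s_1 \cdots \widehat{s_i} \cdots s_k,
\end{equation*}
so left multiplication by $t_i$ realizes exactly the deletion of the $i$-th letter. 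Hence it suffices to prove that $\ell(tw) < \ell(w)$ forces $t = t_i$ for some $i \in \{1, \ldots, k\}$.

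The core of the argument is the dictionary between reflections, roots, and the length function supplied by the geometric (Tits) representation of $(W,S)$ on a real vector space $V$ with simple roots $\{\alpha_s\}_{s \in S}$ and root system $\Phi = W\{\alpha_s\}$, partitioned into positive and negative roots. I would first recall that every $t \in T$ equals $s_{\beta_t}$ for a unique positive root $\beta_t$, and establish the key length criterion: for $\beta \in \Phi^+$,
\begin{equation*}
\ell(s_\beta w) < \ell(w) \iff w^{-1}\beta \in \Phi^-.
\end{equation*}
I would prove this by reducing to the simple-root case $\ell(sw) < \ell(w) \iff w^{-1}\alpha_s \in \Phi^-$ (the standard descent criterion) and conjugating. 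In parallel I would identify the inversion set $N(w^{-1}) = \{\beta \in \Phi^+ : w^{-1}\beta \in \Phi^-\}$ explicitly as $\{\beta^{(1)}, \ldots, \beta^{(k)}\}$ with $\beta^{(i)} = s_1 \cdots s_{i-1}(\alpha_{s_i})$, a set of exactly $k = \ell(w)$ positive roots; the standard reduced-word computation shows each $w^{-1}\beta^{(i)}$ is negative, and $|N(w^{-1})| = \ell(w^{-1}) = \ell(w)$ forces equality.

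With these two facts in hand the conclusion is immediate: if $\ell(tw) < \ell(w)$ then, writing $t = s_{\beta_t}$ with $\beta_t \in \Phi^+$, the length criterion gives $w^{-1}\beta_t \in \Phi^-$, i.e. $\beta_t \in N(w^{-1})$, so $\beta_t = \beta^{(i)}$ for some $i$. Since $s_{\beta^{(i)}} = (s_1 \cdots s_{i-1}) s_i (s_1 \cdots s_{i-1})^{-1} = t_i$, we get $t = t_i$, and the telescoping identity above yields the claimed deletion $tw = s_1 \cdots \widehat{s_i} \cdots s_k$.

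The main obstacle is the length criterion together with the precise description of $N(w^{-1})$: these are exactly the points where one must invest in the geometric representation (or, equivalently, in a reflection cocycle / sign map $T \times W \to \{\pm 1\}$) rather than argue purely formally from the Coxeter relations. An alternative route avoids roots by bootstrapping from the weak Exchange Condition for simple reflections: writing $t$ as a conjugate $usu^{-1}$ with $u$ chosen of minimal length, one tracks how $\ell(tw) < \ell(w)$ propagates through the word of $usu^{-1}w$; the delicate step there is controlling the interaction of lengths under conjugation, which is why I would favor the root-theoretic proof as cleaner and better suited to the Weyl-group setting of this paper.
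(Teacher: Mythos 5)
Your skeleton is the classical root-theoretic proof of strong exchange (as in Humphreys, \S 5.8), and it matches in substance the source the paper implicitly relies on: the paper itself gives no proof of this proposition, deferring to Björner--Brenti, whose argument runs through the permutation representation of $W$ on $T \times \{\pm 1\}$ --- exactly the ``reflection cocycle / sign map'' you mention as the equivalent alternative to roots. So your route is the standard one, not a genuinely different one. The telescoping identity $t_i w = s_1\cdots \hat{s}_i \cdots s_k$ with $t_i = s_1\cdots s_{i-1}s_i s_{i-1}\cdots s_1$, the bijection $t \leftrightarrow \beta_t \in \Phi^+$, and the length criterion $\ell(s_\beta w) < \ell(w) \iff w^{-1}\beta \in \Phi^-$ are all correctly invoked.

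There is, however, one genuine gap: you write $w = s_1\cdots s_k$ with $k = \ell(w)$, i.e.\ you silently assume the expression is reduced, whereas the proposition as stated (and the strong exchange property as standardly formulated, e.g.\ Björner--Brenti, Theorem 1.4.3) applies to an \emph{arbitrary} expression --- that generality is part of what makes it ``strong.'' The restriction is not cosmetic, because it is precisely your counting step that breaks: for a non-reduced word the roots $\beta^{(i)} = s_1\cdots s_{i-1}(\alpha_{s_i})$ need not lie in $N(w^{-1})$ at all (already for $w = ss$ one has $\beta^{(1)} = \alpha_s$ while $N(w^{-1}) = \emptyset$), so ``containment plus $|N(w^{-1})| = \ell(w)$ forces equality'' is unavailable. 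Even in the reduced case your counting tacitly needs the $k$ roots $\beta^{(i)}$ to be pairwise distinct, which you do not check (if $\beta^{(i)} = \beta^{(j)}$ with $i<j$ then $t_i = t_j$, and applying both deletions gives $w = s_1\cdots \hat{s}_i\cdots \hat{s}_j\cdots s_k$, contradicting reducedness --- easy, but it is a step). The standard repair removes both problems and the reducedness hypothesis at once: set $\gamma_j = s_j s_{j-1}\cdots s_1(\beta_t)$; your length criterion gives $\gamma_k = w^{-1}\beta_t \in \Phi^-$ while $\gamma_0 = \beta_t \in \Phi^+$, so there is a least $i$ with $\gamma_{i-1} \in \Phi^+$ and $\gamma_i \in \Phi^-$; since $s_i$ permutes $\Phi^+ \setminus \{\alpha_{s_i}\}$, this forces $\gamma_{i-1} = \alpha_{s_i}$, hence $\beta_t = s_1\cdots s_{i-1}(\alpha_{s_i})$, $t = t_i$, and your telescoping identity finishes. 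With that scan in place of the inversion-set count, your proof is complete and proves the statement in its intended generality. (Note also that the uniqueness of $i$, which the paper later uses when labeling chains by $\lambda(m)$, holds only for reduced words --- another reason to keep the reduced and general cases clearly separated.)
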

\begin{stat}
\textbf{Deletion property}\\
Suppose $w \in W$, $w = s_1s_2\dots s_k$ and $\ell(w) < k$. Then there exist $1 \leq i < j \leq k$ such that $w = s_1s_2\dots \hat{s_i} \dots \hat{s_j} \dots s_k$.
\end{stat}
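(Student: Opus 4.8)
The plan is to derive the deletion property as a direct consequence of the strong exchange property stated just above, the only real task being to locate an index of the word $s_1\cdots s_k$ at which the length drops. To align everything with the left-multiplication form of the exchange property exactly as stated, I would work with the suffix products
\[
u_i := s_i s_{i+1}\cdots s_k, \qquad 1 \le i \le k+1,
\]
so that $u_1 = w$, $u_{k+1} = e$, and $u_i = s_i\,u_{i+1}$ for each $i$.

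First I would invoke the elementary length fact that left multiplication by a generator changes the length by exactly one, so that $\ell(u_i) \neq \ell(u_{i+1})$ for every $i$; this is immediate from the parity homomorphism $W \to \{\pm 1\}$ sending each generator to $-1$, together with $|\ell(su)-\ell(u)|\le 1$. Suppose, for contradiction, that the length never strictly decreased as $i$ runs down from $k+1$ to $1$; then $\ell(u_i) \ge \ell(u_{i+1})+1$ for all $i$, and telescoping gives
\[
\ell(w) = \ell(u_1) \ge \ell(u_{k+1}) + k = k,
\]
contradicting the hypothesis $\ell(w) < k$. Hence there is an index $i$ with $\ell(s_i\,u_{i+1}) = \ell(u_i) < \ell(u_{i+1})$. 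This is the crux: the assumption $\ell(w)<k$ is precisely what forces such a descent.

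Next I would apply the strong exchange property to the element $u_{i+1} = s_{i+1}\cdots s_k$ with the reflection $t = s_i$, noting $s_i \in S \subseteq T$. Since $\ell(t\,u_{i+1}) < \ell(u_{i+1})$, the property produces an index $j$ with $i+1 \le j \le k$ such that $s_i\,u_{i+1} = s_{i+1}\cdots \hat{s_j}\cdots s_k$. Substituting this back into $w = s_1\cdots s_{i-1}\,(s_i\,u_{i+1})$ yields
\[
w = s_1\cdots s_{i-1}\,s_{i+1}\cdots \hat{s_j}\cdots s_k = s_1\cdots \hat{s_i}\cdots \hat{s_j}\cdots s_k,
\]
with $i < j$, which is exactly the claimed expression.

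The argument is routine once the setup is fixed, so I do not anticipate a serious obstacle; the points requiring the most care are purely organizational. One is the orientation: by using suffixes $u_i$ rather than prefixes I can apply the left form of the exchange property verbatim, without passing to inverses, and the indices it returns automatically satisfy $j > i$. The other is the observation that $u_{i+1}$ need not be a reduced word; this is harmless because the strong exchange property as stated does not require reducedness. The only external ingredient is the $\pm 1$ length fact, which I would either quote from the preliminaries or establish in a single line via the parity homomorphism.
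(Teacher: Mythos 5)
Your proof is correct; the paper itself contains no proof of this proposition (it defers all the preliminary statements to the first two sections of Bjorner--Brenti), and your argument --- finding a length descent among the suffix products $u_i = s_i\cdots s_k$ via the parity of the length function, then applying the strong exchange property with $t = s_i$ to cancel a second letter $s_j$ with $j > i$ --- is exactly the standard derivation in that reference. Your remark that the strong exchange property as stated imposes no reducedness hypothesis is precisely the point that lets you use an arbitrary descent index rather than a maximal one, so there is no gap.
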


\begin{theorem} \emph{(proven by Dyer in \cite{Dyer2})}

Suppose $(W,S)$ is a Coxeter system, $W'$ is a subgroup in $W$, generated by a subset of the set of reflections $T$.
Let $S' = \{t\in T| \ell(t't) > \ell(t) \ \forall t' \in T\}$.
Then $(W',S')$ is a Coxeter system.
\end{theorem}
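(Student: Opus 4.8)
The plan is to identify the prescribed set $S'$ with the simple reflections of the root subsystem attached to $W'$, and then to show that such a simple system always generates a Coxeter system by transporting the Strong Exchange Property from $(W,S)$ down to the subgroup. Throughout write $T' = T \cap W'$ for the reflections of $W$ lying in $W'$, and note that the defining condition of $S'$ says exactly that $t \in S'$ iff $T_L(t) \cap T' = \{t\}$; that is, $S'$ consists of the $T'$-reflections having no other $T'$-reflection below them on the left, the ``minimal'' ones.

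First I would pass to Dyer's root system $\Phi = \Phi^+ \sqcup \Phi^-$ for $(W,S)$, with its bijection $\beta \mapsto t_\beta$ between $\Phi^+$ and $T$, under which $t_\gamma \in T_L(w) \iff w^{-1}\gamma \in \Phi^-$. Setting $\Phi' = \{\pm\alpha_t : t \in T'\}$, the reflections in $T'$ stabilize $\Phi'$, so $\Phi'$ is a root subsystem and $\Phi'^+ = \Phi' \cap \Phi^+$ is a positive system for it. Since, classically, a positive root $\beta$ is indecomposable in $\Phi'^+$ if and only if $t_\beta$ permutes $\Phi'^+ \setminus \{\beta\}$, the condition $T_L(t_\beta) \cap T' = \{t_\beta\}$ is equivalent to $\beta$ being simple in $\Phi'^+$. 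Hence $S' = \{t_\beta : \beta \in \Delta'\}$, where $\Delta'$ is the set of simple roots of $\Phi'^+$, and the theorem reduces to the assertion that the reflections in a simple system of a root subsystem form a Coxeter generating set.

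Next I would prove that $S'$ generates $W'$. After establishing that every root of $\Phi'^+$ is a nonnegative combination of $\Delta'$, one argues by induction on $\ell(w')$ that each $w' \in W' \setminus \{e\}$ admits some $s \in S'$ with $\ell(sw') < \ell(w')$: since $w' \neq e$, the element $w'^{-1}$ sends some simple root $\beta \in \Delta'$ into $\Phi^-$, so $s = t_\beta \in S'$ satisfies $\ell(sw') < \ell(w')$ with $sw' \in W'$, and the induction closes. Iterating expresses $w'$ as a product of elements of $S'$, and the same bookkeeping identifies the intrinsic $S'$-length of $w'$ with the count $|T_L(w') \cap T'|$.

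Finally, to see that $(W',S')$ is a Coxeter system I would verify the Deletion/Exchange Property and invoke its standard characterization for a group generated by involutions (see \cite{Bjorner}). Given an $S'$-reduced word $w' = s_1'\cdots s_k'$ and a reflection $t \in T'$ with $\ell(tw') < \ell(w')$, the Strong Exchange Property of $(W,S)$ deletes a single letter, $tw' = s_1'\cdots \widehat{s_i'}\cdots s_k'$, and every factor still lies in $W'$, so the exchange takes place entirely inside $W'$. The delicate point — and the step I expect to be the main obstacle — is proving that $S'$-reduced words really are reduced for the intrinsic length, i.e.\ that $|T_L(\cdot) \cap T'|$ genuinely computes the $S'$-word length and cannot be shortened by a relation of $W$ invisible to $W'$. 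This is precisely where the minimality built into the definition of $S'$ (equivalently, the simplicity of the roots in $\Delta'$) is indispensable: it forces each generator to contribute its own reflection to the restricted inversion set, so that the restricted inversion ``cocycle'' faithfully measures length and the Exchange Property goes through.
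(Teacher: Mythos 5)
First, a point of comparison: the paper does not prove this theorem at all — it is quoted in the Preliminaries with attribution to Dyer \cite{Dyer2} (and a blanket reference to \cite{Bjorner}), so there is no in-paper argument to match; your sketch has to be measured against the known proofs of Dyer and Deodhar. Your reading of the statement is the correct one: as literally printed, with $t'$ ranging over all of $T$, the set $S'$ would be empty (take $t'=t$), and the intended definition is $t\in T\cap W'$ with $T_L(t)\cap W'=\{t\}$, which is exactly how you interpret it. Your overall plan — identify $S'$ with the simple system $\Delta'$ of $\Phi'\cap\Phi^+$, prove generation by a descent induction, then verify Exchange — is the standard root-theoretic route (closer to Deodhar's proof than to Dyer's, who instead works with the reflection cocycle $N(w)=\{t\in T\mid \ell(tw)<\ell(w)\}$ and a cocycle characterization of Coxeter systems). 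But the descent step hides the real content. The claim ``since $w'\neq e$, the element $w'^{-1}$ sends some simple root $\beta\in\Delta'$ into $\Phi^-$'' is essentially the theorem itself. What is free is that $w'\neq e$ sends some \emph{ambient} positive root negative; you need that it sends a root of $\Phi'^+$ negative, i.e.\ that $N(w')\cap W'\neq\emptyset$ for $e\neq w'\in W'$, and then that among such roots a $\Delta'$-simple one occurs. The first assertion already requires an argument — e.g.\ the identity $N(xy)=N(x)\,\triangle\,xN(y)x^{-1}$, which for $x,y\in W'$ restricts to a $(T\cap W')$-valued cocycle because conjugation by $W'$ preserves $T\cap W'$ — and the second requires the nontrivial fact that $\Phi'^+\subseteq\mathbb{R}_{\geq 0}\Delta'$, which you defer. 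As written, the induction does not close.

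The second genuine gap is in the Exchange step: you cannot apply the ambient Strong Exchange Property directly to the word $s_1'\cdots s_k'$, because its letters lie in $T$, not in $S$. Expanding each $s_i'$ into a reduced $S$-word and deleting one ambient simple letter does not delete a whole factor $s_i'$, so the conclusion $tw'=s_1'\cdots\widehat{s_i'}\cdots s_k'$ does not follow from the proposition you invoke. The correct statement — if $t\in T\cap W'$ and $\ell'(tw')<\ell'(w')$, then $t=(s_1'\cdots s_{i-1}')\,s_i'\,(s_{i-1}'\cdots s_1')$ for some $i$ — must be proved inside $\Phi'$, by writing $t=t_\beta$ with $w'^{-1}\beta\in\Phi^-$ and tracking $\beta$ through the prefixes, using the defining property of $S'$ that each $s'\in S'$ makes exactly one root of $\Phi'^+$ negative, namely its own. (The same property is what reconciles the two descent notions you conflate: $\ell(tw')<\ell(w')$ versus $\ell'(tw')<\ell'(w')$ are equivalent for $t\in T\cap W'$ only after one has the formula $\ell'(w')=|N(w')\cap W'|$, which you assert as ``bookkeeping.'') Your closing remark — that minimality forces each generator to contribute its own reflection to the restricted inversion set — correctly names the key mechanism, so the skeleton is salvageable; but the two steps above are precisely where Dyer's and Deodhar's proofs spend their effort, and in your proposal both are asserted rather than proved.
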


\begin{stat}
The number of nonisomorphic Coxeter systems with $n$ generators is finite.
\end{stat}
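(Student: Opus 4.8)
The plan is to reduce the classification of Coxeter systems to the classification of their Coxeter matrices, and then to bound the finitely many numerical entries that can occur. First I would recall that a Coxeter system $(W,S)$ with $|S| = n$ is determined up to isomorphism by its Coxeter matrix $m\colon S\times S \to \{1,2,\dots\}\cup\{\infty\}$, where $m(s,s)=1$ and $m(s,t)=m(t,s)$ records the order of $st$. An isomorphism of Coxeter systems is precisely a bijection $S\to S'$ carrying $m$ to $m'$, that is, an isomorphism of the associated Coxeter graphs (the graph on vertex set $S$ with an edge labelled $m(s,t)$ whenever $m(s,t)\geq 3$). Hence it suffices to show that only finitely many such labelled graphs on $n$ vertices arise, and the only potentially unbounded datum is the size of the labels $m(s,t)$.

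To control the labels I would pass to the geometric reflection representation of $W$ on the real span $V$ of the simple roots, in which each $s\in S$ acts as the orthogonal reflection fixing a hyperplane, and the product $st$ of two generators acts on the $2$-plane $P$ spanned by their roots as a rotation through $2\pi/m(s,t)$, fixing $P^{\perp}$ pointwise. In the setting of this paper the groups $W$ are Weyl groups, so by construction they stabilise the root lattice $Q\subset V$; consequently $st$, written in a $\mathbb{Z}$-basis of $Q$, is an integer matrix and its trace is an integer. Computing that trace from the eigenvalues gives $2\cos(2\pi/m(s,t)) + (n-2) \in \mathbb{Z}$, whence $2\cos(2\pi/m(s,t)) \in \mathbb{Z}$. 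This is exactly the crystallographic restriction: it forces $\cos(2\pi/m(s,t)) \in \{-1,-\tfrac12,0,\tfrac12,1\}$, and since $st$ has finite order the value $\infty$ is excluded, so every off-diagonal entry lies in $\{2,3,4,6\}$.

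With the labels confined to the four-element set $\{2,3,4,6\}$, the number of Coxeter matrices on the fixed vertex set $S$ is at most $4^{\binom{n}{2}}$, since each of the $\binom{n}{2}$ unordered pairs $\{s,t\}$ independently takes one of four values; this already bounds the count before quotienting by the relabellings in $S_n$, so a fortiori there are finitely many isomorphism classes. I expect the single genuine step, everything else being bookkeeping, to be the boundedness of the labels, i.e.\ the appearance of the crystallographic restriction. The clean way to secure it is the lattice-trace argument above: realising $st$ as a finite-order automorphism of $Q$ makes its trace an integer, and the planar rotation then pins $m(s,t)$ to $\{2,3,4,6\}$. Once the entries are bounded, the finiteness of the count is immediate.
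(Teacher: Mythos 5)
Your proposal is correct for the case it covers, and it is worth separating two issues. First, the paper itself contains no proof of this proposition: it is stated as background and deferred to Bjorner and Brenti \cite{Bjorner}, and to the extent the paper has an argument in mind for the finiteness it actually uses later, it is the classification of Weyl groups into the types $A_n$, $B_n$, $D_n$, $E_6$, $E_7$, $E_8$, $I_2(6)$ (Proposition \ref{weylclass}), from which finiteness in each rank is immediate. Your route is genuinely different and more self-contained: you reduce a Coxeter system to its Coxeter matrix and then bound the entries directly via the crystallographic restriction --- realising $st$ as a finite-order automorphism of the root lattice forces $2\cos(2\pi/m(s,t)) + (n-2) \in \mathbb{Z}$, hence $m(s,t) \in \{2,3,4,6\}$, giving the crude but sufficient bound $4^{\binom{n}{2}}$ before quotienting by relabellings. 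What the classification buys is an exact list; what your trace argument buys is independence from that deep theorem, at the price of some standard facts you invoke silently (that the $\mathbb{Z}$-span of a crystallographic root system is a full-rank lattice with the simple roots as a $\mathbb{Z}$-basis, and that positive definiteness of the form gives $V = P \oplus P^{\perp}$ so that $st$ really is a rotation on $P$ and the identity on $P^{\perp}$).

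Second, your instinct to restrict to Weyl groups was not a convenience but a necessity: the statement as printed, for arbitrary Coxeter systems with $n$ generators, is false, since the dihedral systems $I_2(m)$ for $m = 3, 4, 5, \dots, \infty$ are pairwise nonisomorphic Coxeter systems on two generators, and restricting to finite Coxeter groups does not help. Only the crystallographic hypothesis (or something equivalent) rescues the claim, and that is also the only form in which the paper uses it, namely for reflection subgroups of Weyl groups via Proposition \ref{weylsub}. So your repaired statement is the right one; just be explicit that you are proving the proposition under the Weyl-group hypothesis rather than as literally stated, and note that $\cos(2\pi/m) = 1$ is excluded because it corresponds to $m = 1$, i.e.\ $s = t$.
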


\subsection{Bruhat order}
Let us now introduce an order on a Coxeter group $W$.
\begin{definition}
Let $u,w \in W$. Then:
\begin{align*} 
&(1) \ u \rightarrow w \text{ means } \ell(u) < \ell(v) \text{ and } \exists \ t \in T\text{ means }w = ut,\\
&(2) \ u \lhd w \text{ means } u \rightarrow w \text{ and } \ell(w) = \ell(u) + 1, \\
&(3) \ u \leq w \text{ means there exists a chain in }W,\\
& \ \ \ \ \text{ such that } u = u_0 \rightarrow u_1 \rightarrow \dots  \rightarrow u_{k-1} \rightarrow u_k = w.
\end{align*}
\end{definition}

\begin{definition}
A Bruhat graph is a graph with vertices labeled by elements of $W$ with edges $(u,w)$ such that $u \rightarrow w$.
\end{definition}

\begin{definition}
A subword of a reduced word $s_1s_2\dots s_q$ is a word $s_{i_1}s_{i_2}\dots s_{i_k}$ such that $1\leq i_1 < i_2 < \dots < i_k \leq  q$
\end{definition}

\begin{lemma}
Suppose $u,w \in W$,$u \neq w$, $w = s_1s_2\dots s_q$ is a reduced word, and there exists a reduced word for $u$ that is a subword of $s_1s_2\dots s_q$. Then  there exists $v \in W$ that satisfies the following conditions:
\begin{align*} 
&(1) \ v > u\\
&(2) \ \ell(v) = \ell(u) + 1 \\
&(3) \ \text{There exists a reduced word for } v, \text{ that is a subbword of } s_1s_2\dots s_q
\end{align*}
\end{lemma}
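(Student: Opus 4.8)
The plan is to induct on $q=\ell(w)$, removing the last letter of the fixed reduced word $w=s_1s_2\dots s_q$ and comparing $u$ to the truncation $w'=s_1s_2\dots s_{q-1}$, which is itself reduced with $\ell(w')=q-1$. Since a length-$q$ subword of $s_1\dots s_q$ is the whole word and $u\neq w$, any reduced word for $u$ that is a subword of $w$ has length $\ell(u)=k<q$. The key decision is to split on whether $u$ admits \emph{some} reduced word that is a subword of $w'$ (equivalently, whether $u$ can be represented without using position $q$); this is exactly what prevents the naive induction from stalling.

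\emph{Case A: $u$ has a reduced word that is a subword of $w'$.} If $u=w'$, I would take $v=w=w's_q$: it is reduced of length $q=\ell(u)+1$, it is trivially a subword of $w$, and $w'\lhd w$ gives $v>u$. If $u\neq w'$, the induction hypothesis applied to the pair $(u,w')$ produces $v$ with $v>u$, $\ell(v)=\ell(u)+1$, and $v$ a reduced subword of $w'$, hence of $w$; this $v$ works verbatim.

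\emph{Case B: $u$ has no reduced word that is a subword of $w'$.} Then the given subword realizing $u$ must use position $q$, so $u=u's_q$ with $u'=s_{i_1}\dots s_{i_{k-1}}$ a reduced subword of $w'$ and $\ell(u')=k-1<q-1$. Applying the induction hypothesis to $(u',w')$ yields a cover $v'$ of $u'$ that is a reduced subword of $w'$, with $\ell(v')=k$. I would set $v=v's_q$. To see this has the right length I claim $\ell(v's_q)=\ell(v')+1$. Indeed, $v'$ is a subword of $w'$ whereas $u$ is not, so $v'\neq u$; if instead $\ell(v's_q)<\ell(v')$, then since $s_q\in D_R(v')\setminus D_R(u')$ and $u'<v'$, the lifting property forces $u=u's_q\leq v'$, and the equality $\ell(u)=k=\ell(v')$ would give $u=v'$, a contradiction. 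Hence $v=v's_q$ is a reduced subword of $w$ of length $k+1=\ell(u)+1$. Finally $v>u$: writing $v'=u't$ with $t\in T$ gives $v's_q=(u's_q)(s_qts_q)$, so $v$ and $u$ differ by the reflection $s_qts_q$ with $\ell(v)=\ell(u)+1$, i.e. $u\lhd v$.

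I expect the main obstacle to be exactly the degeneracy hidden in Case B: a careless induction would hand back the cover $v'=u$ itself, yielding no progress. The role of the Case~A/Case~B dichotomy is to guarantee that the inductively produced $v'$ lives inside $w'$ and therefore cannot coincide with a $u$ that is not representable inside $w'$; the lifting property (a standard consequence of the strong exchange property stated above) then converts this non-coincidence into the length identity $\ell(v's_q)=\ell(v')+1$ that makes $v=v's_q$ a genuine cover. The base case $q=1$ is immediate, since there $u=e$ and one takes $v=s_1=w$.
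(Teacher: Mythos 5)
The paper itself contains no proof of this lemma: like the rest of Section 1 it is quoted from the first two sections of Bjorner--Brenti, where it appears as Lemma 2.2.1 and is proved by the same induction on $q=\ell(w)$ that you set up, only truncating at the first letter $s_1$ instead of the last letter $s_q$. Your architecture matches that standard argument: the dichotomy on whether $u$ is representable inside the truncation $w'$; the observation that otherwise the exhibited subword must use position $q$, so $u=u's_q$ with $u'$ a reduced subword of $w'$ of length $k-1<q-1$; and the closing computation $v=v's_q=u'ts_q=u\,(s_qts_q)$ with $\ell(v)=\ell(u)+1$, giving $u\lhd v$. All of this, together with the degenerate cases $u=w'$ and $q=1$, is correct.

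The genuine gap is your justification of $\ell(v's_q)=\ell(v')+1$. You invoke the lifting property and call it ``a standard consequence of the strong exchange property,'' but in the Bjorner--Brenti development that the paper defers to, lifting is Proposition 2.2.7 and is deduced from the subword property (their Theorem 2.2.2), which in turn is the corollary of the very lemma you are proving; cited this way, the argument is circular. And the step carries real content: what strong exchange alone gives you in the bad subcase $s_q\in D_R(v')$ is that $b:=v's_q$ is a reduced subword of $w'$ satisfying $b=u(s_qts_q)\lhd u$ --- an element \emph{below} $u$ sitting inside $w'$, which contradicts nothing in your Case B. The statement you actually need ($u'<v'$ and $s_q\in D_R(v')\setminus D_R(u')$ imply $u's_q\le v'$) is true and does admit an exchange-only proof --- it is Deodhar's Property $Z$, established by its own induction before any subword machinery --- so your proof is salvageable; but you must either cite that result explicitly or build the needed special case into your induction, rather than appeal to lifting as if it were already available at this stage of the theory.
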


\begin{corollary}

\textbf{Subword property}\\
Suppose $w = s_1s_2\dots s_q$ is a reduced word. Then $u \leq w$ if and only if there exist a reduced word for $u$ that is a subword of $s_1s_2\dots s_q$.
\end{corollary}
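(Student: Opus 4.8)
The plan is to prove the two implications separately, relying on the preceding Lemma for the ``if'' direction and on the Strong Exchange and Deletion properties for the ``only if'' direction.

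For the direction asserting that a reduced subword forces $u \le w$, I would build an increasing chain of covering relations from $u$ up to $w$ by iterating the Lemma. Set $u_0 = u$. As long as the current element $u_i$ is not equal to $w$, it still admits a reduced word that is a subword of $s_1 s_2 \dots s_q$, so the Lemma supplies an element $u_{i+1}$ with $u_{i+1} > u_i$, $\ell(u_{i+1}) = \ell(u_i) + 1$, and $u_{i+1}$ again carrying a reduced subword of $s_1 \dots s_q$. This yields a covering $u_i \lhd u_{i+1}$, in particular $u_i \to u_{i+1}$. The invariant ``admits a reduced subword of $s_1\dots s_q$'' is preserved at every step, so the construction can be repeated.

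The crux of this direction is to argue that the chain actually terminates at $w$ rather than stalling or overshooting. The lengths $\ell(u_i)$ strictly increase by $1$ at each step, and every $u_i$ has a reduced word that is a subword of $s_1 \dots s_q$, so $\ell(u_i) \le q$ for all $i$; hence the process must stop. It can only stop at an element equal to $w$, since the Lemma applies whenever the current element differs from $w$. Finally, once $\ell(u_i) = q$, its reduced word is a length-$q$ subword of $s_1 \dots s_q$, which can only be the full word $s_1 \dots s_q$ itself, forcing $u_i = w$. Thus the chain $u = u_0 \lhd u_1 \lhd \dots \lhd u_m = w$ witnesses $u \le w$.

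For the converse, assume $u \le w$ and fix a chain $u = u_0 \to u_1 \to \dots \to u_k = w$ as in the definition. I would reduce to the one-step statement: if $a \to b$ and $b = r_1 \dots r_p$ is any reduced word, then $a$ has a reduced word that is a subword of $r_1 \dots r_p$. Granting this, I induct downward from the top of the chain: $w$ trivially has the reduced subword $s_1 \dots s_q$ of itself, and each step transmits a reduced subword from $u_{i+1}$ down to $u_i$; since a subword of a subword of $s_1 \dots s_q$ is again a subword of $s_1\dots s_q$, the bottom element $u$ inherits such a reduced word. To prove the one-step statement, write $b = at_0$ with $t_0 \in T$ and $\ell(a) < \ell(b)$, and put $t' = b t_0 b^{-1} \in T$, so that $t'b = a$ and $\ell(t'b) < \ell(b)$. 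The Strong Exchange Property then yields $a = r_1 \dots \hat{r_j} \dots r_p$ for some $j$, an expression for $a$ as a subword of $r_1 \dots r_p$. This expression need not be reduced, which is the one technical point to handle: by repeatedly invoking the Deletion Property I delete redundant letters until the word becomes reduced, and the result is still a subword of $r_1 \dots r_p$, completing the argument.
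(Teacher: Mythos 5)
Your proof is correct and follows essentially the intended route: the paper defers this corollary to Bj\"orner--Brenti, and the preceding Lemma is stated precisely so that the ``if'' direction follows by iterating it as you do, while your ``only if'' direction (conjugating to $t' = bt_0b^{-1}$ so the left-sided Strong Exchange Property applies, then cleaning up with the Deletion Property) is the standard argument. Both the termination argument via $\ell(u_i) \leq q$ and the observation that a non-reduced subword remains a subword after deletions are handled correctly.
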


\begin{stat}
If $W$ is finite, then there exists an element of maximal length $w_0 \in W$.
\end{stat}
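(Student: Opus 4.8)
The plan is to derive the existence of a maximal-length element directly from finiteness, and then to identify the standard longest element $w_0$ together with its characteristic properties, since the latter is what makes the statement useful in the sequel.

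For existence itself the argument is immediate. Since $W$ is finite, the image $\ell(W) \subseteq \mathbb{Z}_{\geq 0}$ is a finite set of nonnegative integers and therefore has a largest element $N := \max_{w \in W} \ell(w)$. Any $w_0 \in W$ with $\ell(w_0) = N$ is then an element of maximal length. This part carries no real obstacle; the content lies in the properties of such a $w_0$.

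The substantive claim is that $w_0$ is unique and is the maximum of the Bruhat order. My approach would be as follows. First, maximality of $\ell(w_0)$ forces $\ell(w_0 s) < \ell(w_0)$ for every $s \in S$, since $\ell(w_0 s) = \ell(w_0) \pm 1$ and an increase is impossible; hence $D_R(w_0) = S$, and symmetrically $D_L(w_0) = S$. To show $w \leq w_0$ for all $w$, I would proceed by induction on $\ell(w)$ using a lifting step supplied by the strong exchange property: starting from $e \leq w_0$, given $w \leq w_0$ and some $s \in S$ with $\ell(ws) > \ell(w)$, one verifies $ws \leq w_0$ as well, using that $s \in D_R(w_0)$. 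Iterating reaches every element of $W$. Equivalently, one may invoke the subword property (the Corollary above): fixing a reduced word for $w_0$, a reduced word for an arbitrary $w$ can be exhibited as one of its subwords, whence $w \leq w_0$.

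The main obstacle I anticipate is uniqueness, that is, ruling out two distinct elements sharing the maximal length $N$. I would settle this as a consequence of the maximality in the Bruhat order established above: if $w_0$ and $w_0'$ both have length $N$, then $w_0 \leq w_0'$ and $w_0' \leq w_0$, so antisymmetry of the Bruhat order gives $w_0 = w_0'$. The delicate point is justifying the lifting step rigorously, namely that $ws \leq w_0$ whenever $w \leq w_0$ and $s$ raises the length of $w$; this is exactly where the strong exchange property and the subword corollary are indispensable, and it is the step I would write out in full.
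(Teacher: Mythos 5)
Your existence argument is correct and complete: since $W$ is finite, the set $\ell(W)\subseteq\mathbb{Z}_{\geq 0}$ is finite and attains a maximum, which is all this statement asserts; the paper itself gives no proof here, deferring (as with all propositions in Section~\ref{basics}) to Bjorner and Brenti. The extra material you develop --- $D_R(w_0)=S$, Bruhat-maximality of $w_0$ via the lifting step, and uniqueness by antisymmetry --- is not needed for this statement but is precisely the standard argument for the \emph{subsequent} proposition (that $w_0$ is the maximum in the Bruhat order), and your sketch of it, including the point that the lifting step is where strong exchange or the subword property must be invoked, is sound.
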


\begin{stat}
The element of maximal length $w_0$ is the maximal element with respect to the Bruhat order.
\end{stat}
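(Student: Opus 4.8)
The plan is to reduce the statement entirely to the Subword Property (the Corollary above), since that result turns every Bruhat comparison into a statement about reduced words. Concretely, I want to show $w \le w_0$ for every $w \in W$, where $w_0$ denotes an element of maximal length and $N := \ell(w_0)$. As a preliminary observation, recall that for $s \in S$ one always has $\ell(ws) = \ell(w) \pm 1$ (a consequence of the exchange and deletion properties); hence maximality of $\ell(w_0)$ forces $\ell(w_0 s) = \ell(w_0) - 1 < \ell(w_0)$ for every $s \in S$, that is, $D_R(w_0) = S$. Moreover, since the length strictly increases along every edge $u \to v$, the relation $\le$ is genuinely antisymmetric, so once I prove $w \le w_0$ for all $w$ it will follow at once that $w_0$ is the unique such maximal element, justifying the phrase ``the element of maximal length''.

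The technical core of the argument is a lifting property, which I intend to derive directly from the Subword Property: if $u \le v$, $s \in S$, $\ell(vs) < \ell(v)$ and $\ell(us) > \ell(u)$, then $us \le v$. To prove this, I would first use $\ell(vs) < \ell(v)$ to choose a reduced word for $v$ ending in $s$, say $v = s_1 \cdots s_{q-1} s$ with $q = \ell(v)$. By the Subword Property, $u$ has a reduced word that is a subword of $s_1 \cdots s_{q-1} s$, and I would split into two cases according to whether this subword uses the final letter $s$. If it does not, then appending $s$ yields a word for $us$ of length $\ell(u)+1$ that is a subword of $v$; since $\ell(us) = \ell(u)+1$ this word is reduced, so $us \le v$ by the Subword Property. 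If it does use the final $s$, a length count shows $\ell(us) \le \ell(u) - 1$, contradicting $\ell(us) > \ell(u)$, so this case cannot occur. This case analysis, together with the bookkeeping guaranteeing that the constructed word is reduced, is the step I expect to require the most care; it is the genuine combinatorial content that rules out ``local maxima''.

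With the lifting property in hand, I would finish by induction on $\ell(w)$. The base case $w = e$ is immediate, as the empty word is a subword of any reduced word for $w_0$. For the inductive step, choose $s \in D_R(w)$ and write $w = w's$ with $\ell(w') = \ell(w) - 1$; by the inductive hypothesis $w' \le w_0$. Now $s \notin D_R(w')$, because $\ell(w's) = \ell(w) > \ell(w')$, while $s \in D_R(w_0)$ by the preliminary observation. Applying the lifting property to $u = w' \le v = w_0$ with this $s$ yields $w = w's \le w_0$, completing the induction. Uniqueness of $w_0$ then follows from antisymmetry of the Bruhat order, and the proposition is established.
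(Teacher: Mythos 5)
Your proof is correct: the lifting property is derived soundly from the subword property (the dichotomy on whether the reduced subword for $u$ uses the final letter $s$ of the reduced word $v=s_1\cdots s_{q-1}s$, together with $\ell(ws)=\ell(w)\pm 1$ forcing $D_R(w_0)=S$, is exactly the needed content), and the induction on $\ell(w)$ plus antisymmetry of the order closes the argument without gaps. The paper gives no proof of its own for this proposition --- it defers all of the preliminary propositions to Bj\"orner--Brenti \cite{Bjorner} --- and your route (lifting property obtained from the subword property, then induction on length) is essentially the standard argument in that reference, so the approaches coincide.
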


\begin{corollary} 
The maps $w \mapsto ww_0$ and $w \mapsto w_0w$ are any automorphisms of the Bruhat order.
\end{corollary}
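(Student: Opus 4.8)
The plan is to show that the two maps are \emph{anti}-automorphisms of the Bruhat order, i.e. order-reversing bijections; this is the natural reading of the statement and the form in which it is useful. By symmetry (working with left descents instead of right, or composing with the order-preserving involution $w\mapsto w^{-1}$) it suffices to treat the right translation $\varphi\colon w\mapsto ww_0$. First I would record that $\varphi$ is a bijection: since $\ell(w_0^{-1})=\ell(w_0)$ and, by the preceding propositions, $w_0$ is the unique element of maximal length, we get $w_0^{-1}=w_0$, hence $w_0^2=e$ and $\varphi$ is an involution. Because an order-reversing involution is automatically an anti-automorphism, it is then enough to prove the single implication $u\le w\Rightarrow ww_0\le uw_0$.

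The heart of the argument is the length-complementation formula $\ell(ww_0)=\ell(w_0)-\ell(w)$ for every $w\in W$, and this is the step I expect to carry the real content. The inequality $\ell(ww_0)\ge \ell(w_0)-\ell(w)$ is immediate from subadditivity of length applied to $w_0=w^{-1}(ww_0)$ together with $\ell(w^{-1})=\ell(w)$. For the reverse inequality I would fix a reduced word $w^{-1}=s_1\cdots s_m$, with $m=\ell(w)$, and extend it to a reduced word $w_0=s_1\cdots s_m s_{m+1}\cdots s_N$ of the longest element, where $N=\ell(w_0)$. Then $ww_0=(s_1\cdots s_m)^{-1}w_0=s_{m+1}\cdots s_N$ after cancellation, exhibiting an expression of length $N-m$, so $\ell(ww_0)\le \ell(w_0)-\ell(w)$ and equality follows.

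The one nontrivial input here is that every reduced word extends to a reduced word for $w_0$, equivalently that any $w$ with $\ell(w)<\ell(w_0)$ admits a simple reflection $s$ with $\ell(ws)>\ell(w)$. This is where I expect the main obstacle to lie, since it is the only place the maximality of $w_0$ is genuinely used. I would prove the contrapositive: if $D_R(w)=S$ then $w=w_0$. For this one shows by induction on $\ell(u)$ that every $u\in W$ satisfies $u\le w$, the inductive step being the lifting property (if $u'\le w$ and $s\in D_R(w)$ then $u's\le w$), which is itself a consequence of the subword property. Since $w$ then dominates all of $W$, it must be the maximal element $w_0$; the rest of the argument is formal.

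Finally I would deduce order reversal. Comparable elements are joined by saturated chains, guaranteed by the Lemma preceding the subword property, so it suffices to check the claim on covers. If $u\lhd w$, say $w=ut$ with $t\in T$ and $\ell(w)=\ell(u)+1$, then $uw_0=wtw_0=(ww_0)(w_0tw_0)$, and $t':=w_0tw_0$ again lies in $T$ since $T$ is closed under conjugation by definition. Thus $ww_0$ and $uw_0$ differ by the reflection $t'$, while the length formula gives $\ell(uw_0)=\ell(w_0)-\ell(u)=\ell(ww_0)+1$. Hence $ww_0\lhd uw_0$, so covers are reversed, and applying this along a saturated chain from $u$ to $w$ yields $ww_0\le uw_0$. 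Together with the involutivity of $\varphi$ this shows $\varphi$ is an anti-automorphism, and the same reasoning (or composition with $w\mapsto w^{-1}$) settles $w\mapsto w_0w$.
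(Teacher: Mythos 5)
Your proof is correct, and you rightly read the garbled statement (``are any automorphisms'') as asserting that the two maps are \emph{anti}automorphisms, i.e.\ order-reversing bijections, which is exactly what you establish. The paper offers no proof of its own here---this corollary sits in the preliminaries, whose proofs are deferred wholesale to Bj\"orner--Brenti---and your argument (the complementation formula $\ell(ww_0)=\ell(w_0)-\ell(w)$ obtained by extending a reduced word of $w^{-1}$ to one for $w_0$, the lifting property to show $D_R(w)=S$ forces $w=w_0$, and cover-by-cover reversal propagated along saturated chains) is essentially the standard proof found in that reference.
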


\begin{stat}\textbf{Chain property}\\
Suppose $u < w$. Then there exists a chain $u = x_0 < x_1 < \dots < x_k = w$ such that $\forall i$ $\ell(x_{i+1}) = \ell(u) + i$.
\end{stat}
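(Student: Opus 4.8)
The plan is to prove this by induction on the length gap $d = \ell(w) - \ell(u)$, reducing the whole statement to the problem of producing a single covering element just above $u$ that still lies weakly below $w$. Indeed, once I can find some $x_1$ with $u \lhd x_1$ and $x_1 \le w$, the chain property follows by applying the induction hypothesis to the pair $x_1 < w$, whose length gap is $d-1$, and then prepending $u$ to the chain so obtained.

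First I would dispose of the base case $d = 1$. If $u < w$ and $\ell(w) = \ell(u) + 1$, then since every arrow $\rightarrow$ strictly increases length, any chain $u = u_0 \rightarrow \dots \rightarrow u_k = w$ witnessing $u < w$ must collapse to a single step; hence $u \lhd w$, and the one-term chain $u < w$ already has the required saturated form.

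For the inductive step I would invoke the Subword Property together with the preceding Lemma, which together carry essentially all of the weight. Fix a reduced word $w = s_1 s_2 \dots s_q$ with $q = \ell(w)$. Since $u < w$, the Subword Property guarantees a reduced word for $u$ that is a subword of $s_1 \dots s_q$; this is exactly the hypothesis demanded by the Lemma. The Lemma then returns an element $v$ with $v > u$, with $\ell(v) = \ell(u) + 1$, and admitting a reduced word that is again a subword of $s_1 \dots s_q$. Setting $x_1 = v$ and reading the Subword Property in the reverse direction, this last property forces $x_1 \le w$. Thus $u \lhd x_1 \le w$, and I either conclude directly when $x_1 = w$ or feed $x_1 < w$ into the induction hypothesis.

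The only delicate points I anticipate are bookkeeping rather than conceptual: I must verify that $x_1 \neq u$ and that the gap genuinely drops, both of which are immediate from $\ell(x_1) = \ell(u) + 1$, and I must check that the indices in the assembled chain line up so that $\ell(x_i) = \ell(u) + i$ for every $i$. The substantive content has already been isolated in the Lemma, whose construction of the intermediate subword $v$ is where the real argument lives; the chain property is then just a disciplined iteration of that one step.
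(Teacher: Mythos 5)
Your proof is correct: the paper itself gives no argument for this proposition, deferring to Bj\"orner--Brenti, and your induction on $\ell(w)-\ell(u)$ via the Subword Property and the paper's Lemma~1 is precisely that standard argument. The bookkeeping you flag (the base case $d=1$ forcing $u \lhd w$, and $x_1 \neq w$ when $d \geq 2$ since $\ell(x_1) < \ell(w)$) is handled correctly, so nothing is missing.
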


\subsection{Bruhat interval structure}

\begin{definition}
A Bruhat interval $[u,w]$ is the set $\{v \in W \ | \ u \leq v \leq w \}$. The length of a Bruhat interval $[u,w]$ is $\ell([u,w])= \ell(w) - \ell(u)$.
The set of all chains of the maximal length in the interval $[u,w]$ is denoted by $\mathcal{M}(u,w)$.
\end{definition}

The total order on $\mathcal{M}(u,w)$ is introduced by associating each $m \in  \mathcal{M}(u,w)$ with an ordered set of natural numbers $\lambda(m)$ as will be described below and ordering those sets and corresponding elements of $\mathcal{M}(u,w)$ lexicographically. 
Suppose $m$ is a chain $w = x_0 \rhd x_1 \rhd \dots \rhd x_n = u$. Let us fix a reduced subword $w = s_1s_2\dots s_q$. From the strong exchange property, it follows that there exists a unique $i$ such that $x_1 = s_1s_2\dots \hat{s_i} \dots s_q$.

Let us fix $\lambda_1(m) = i$. The following $\lambda(m)$ are constructed through repeating the same process. If after $k$ repetitions, the last deleted letter was $s_j$, then one can fix $\lambda_k(m) = j$. The order $\prec$ on $\mathcal{M}(u,w)$ is introduced as a lexicographic order on $\lambda(m)$. 

\begin{stat}
 There exists unique a chain $m_0 \in \mathcal{M}(u,w)$  such that $ \lambda(m_0) $ is increasing.
\end{stat}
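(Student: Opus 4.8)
The plan is to analyse the labelling $m \mapsto \lambda(m)$ one rank-two step at a time and then globalise. First note that $\lambda$ is injective: from $\lambda(m)=(\lambda_1,\dots,\lambda_n)$ one recovers each $x_k$ as the element whose reduced word is $s_1\cdots s_q$ with the positions $\lambda_1,\dots,\lambda_k$ removed, so the whole chain is determined by its label. Hence $\prec$ is a genuine total order on the finite set $\mathcal{M}(u,w)$, and a $\prec$-minimal element $m_0$ exists. I will prove the statement by establishing (E) that $m_0$ has increasing label, and (U) that no other chain does.

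The engine for both parts is a rank-two lemma. Recall that every length-two Bruhat interval is \emph{thin}: it contains exactly two elements strictly between its endpoints (standard, see Bjorner and Brenti). So if $x_{k-1}\rhd x_k\rhd x_{k+1}$ is a subchain, the interval $[x_{k+1},x_{k-1}]$ carries exactly two maximal chains. Fixing the reduced word of $x_{k-1}$ inherited from $s_1\cdots s_q$, I claim that of these two chains exactly one has an increasing label, and that it is the one deleting the smaller-indexed letter first, hence the $\prec$-smaller of the two. To see this, let the two covers below $x_{k-1}$ delete positions $i_1<i_2$, and use the strong exchange property to locate, for each, the unique second deletion $j_1$, $j_2$ that lands on $x_{k+1}$; the content of the lemma is the pair of inequalities $i_1<j_1$ and $j_2<i_2$. \textbf{This index bookkeeping is where I expect the real work to lie}: one must show that the chain starting with the larger letter $i_2$ is forced to delete, at its second step, a letter lying to the left of $i_2$, while the chain starting with $i_1$ cannot. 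I would extract these inequalities by writing the two middle elements as $x_{k-1}$ times the corresponding reflections and comparing reduced words, using that deleting a position is a cover precisely when the resulting subword stays reduced.

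Granting the rank-two lemma, existence (E) is quick. If $\lambda(m_0)$ were not increasing it would have a descent $\lambda_k(m_0)>\lambda_{k+1}(m_0)$ at some $k$; then $x_{k-1}\rhd x_k\rhd x_{k+1}$ is the \emph{decreasing} chain of the diamond $[x_{k+1},x_{k-1}]$, so rerouting through the other middle vertex produces a chain $m'$ that agrees with $m_0$ up to step $k-1$ and, by the lemma, deletes a strictly smaller position at step $k$. Thus $m'\prec m_0$ at the first index where they differ, contradicting minimality. Crucially the later labels are irrelevant, since the lexicographic comparison is already decided at step $k$; this is what keeps the reroute clean even though rerouting a diamond can change the reduced subword carried past $x_{k+1}$.

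Uniqueness (U) is the subtler half. The natural strategy is a ``bubble sort'': any chain with a descent can be rerouted as above to a strictly $\prec$-smaller chain, so iterating terminates at a descent-free, i.e. increasing, chain, and since each move strictly decreases the label this should drive every chain to the single $\prec$-minimal sink $m_0$. Making this confluence precise is exactly the assertion that this reduced-word labelling is an EL-labelling of the Bruhat order, which is the theorem of Bjorner and Wachs; for a self-contained argument I would instead induct on the interval length. Comparing a hypothetical second increasing chain $m$ with $m_0$ at their first difference $k$, the common element $x_{k-1}$ and its inherited reduced word let me pass, when $k\ge 2$, to the shorter interval $[u,x_{k-1}]$, where the induction hypothesis forces the two increasing continuations to coincide and contradicts $\lambda_k(m_0)<\lambda_k(m)$. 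The remaining top-level case $k=1$ is the genuine obstacle, because two coatoms of $w$ need not share a common lower cover and hence need not sit in a single diamond; here I would instead use the reflection ordering that $s_1\cdots s_q$ induces on the inversions of $w$ to order the admissible first deletions coherently across the whole interval, recovering the uniqueness of the increasing chain.
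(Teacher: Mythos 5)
The paper itself never proves this proposition: it is recalled verbatim from the preliminaries of Bj\"orner--Brenti \cite{Bjorner} (it is the Bj\"orner--Wachs lexicographic shellability statement, Theorem 2.7.2 there), so your attempt has to be measured against that standard proof, whose skeleton you do reproduce correctly. Your preliminary observations are sound: $\lambda$ is injective (the chain is recovered by deleting positions $\lambda_1,\dots,\lambda_k$ from $s_1\cdots s_q$), so $\prec$ is a total order and a minimal chain $m_0$ exists; every length-two interval is thin (Corollary \ref{diam}); and your existence argument (E) is correct \emph{granted} the rank-two lemma, including the genuinely important remark that the labels beyond the rerouted step are irrelevant to the lexicographic comparison even though the reroute changes the deleted-position set carried past $x_{k+1}$. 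The $k\ge 2$ half of your uniqueness induction is also fine: equal labels $\lambda_1,\dots,\lambda_{k-1}$ force equal elements and an equal inherited reduced word for $x_{k-1}$, so the induction hypothesis for $[u,x_{k-1}]$ applies.

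But the proposal has two genuine gaps, located exactly where you flag them, and they are the load-bearing steps. First, the rank-two lemma --- the inequalities $i_1<j_1$ and $j_2<i_2$ --- is asserted, not proved: saying the work lies in ``index bookkeeping'' that you ``would extract by comparing reduced words'' names the difficulty without resolving it. Deriving, from $s_1\cdots\hat{s}_{i_1}\cdots\hat{s}_{j_1}\cdots s_q$ and $s_1\cdots\hat{s}_{i_2}\cdots\hat{s}_{j_2}\cdots s_q$ representing the same element together with the strong exchange property, that the chain opening with the larger position must descend is the entire non-formal content of the proposition; everything else in your outline is bookkeeping around it. Second, the $k=1$ case of uniqueness is a pointer rather than an argument: the statement that the sequence of reflections $t_i=s_1\cdots s_{i-1}s_is_{i-1}\cdots s_1$ attached to the positions of the reduced word orders first deletions ``coherently across the whole interval'' is Dyer's reflection-ordering/EL-shellability theorem (equivalently Bj\"orner--Wachs for this very labelling), which \emph{contains} the proposition being proved --- invoking it is circular in a proof you announced as self-contained. (Nor can you patch $k=1$ with the paper's Lemma \ref{techlem1}: its proof uses Corollary \ref{shellable}, which in \cite{Bjorner} is itself derived from this proposition.) Your own bubble-sort remark concedes the issue: termination of descent-reroutes yields \emph{an} increasing chain from each starting point, not a unique sink. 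In short: the architecture matches the textbook proof, but the two pivotal steps are deferred rather than done, so this is a correct plan and not yet a proof.
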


\begin{stat}
Such $m_0 \in \mathcal{M}(u,w)$ that $\lambda(m_0)$ is increasing is the least element in $\mathcal{M}(u,w).$
\end{stat}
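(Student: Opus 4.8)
The plan is to deduce this from the previous proposition by showing that the \emph{least} element of $\mathcal{M}(u,w)$ must already have an increasing label sequence. Since $\mathcal{M}(u,w)$ is finite and $\prec$ is a total (lexicographic) order, a least element $m_\ast$ exists. If I can prove that $\lambda(m_\ast)$ is increasing, then by the uniqueness asserted in the previous proposition $m_\ast$ coincides with the increasing chain $m_0$, whence $m_0=\min_\prec\mathcal{M}(u,w)$, which is exactly the claim. So the whole problem reduces to: \emph{the lexicographically smallest maximal chain has an increasing label}.

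I would argue this by contradiction. Suppose $\lambda(m_\ast)$ has a descent, i.e.\ there is an index $k$ with $\lambda_k(m_\ast)>\lambda_{k+1}(m_\ast)$. Write the chain as $w=x_0\rhd\dots\rhd x_n=u$ and focus on the two consecutive covers $x_{k-1}\rhd x_k\rhd x_{k+1}$. By the time the construction reaches $x_{k-1}$ a definite reduced word $\mathbf{x}_{k-1}$ (namely $s_1\dots s_q$ with the positions $\lambda_1,\dots,\lambda_{k-1}$ removed) has been fixed, and both labels $\lambda_k,\lambda_{k+1}$ are read off from it. The idea is to \emph{reroute} the chain through the length-$2$ interval $[x_{k+1},x_{k-1}]$: I replace the segment $x_{k-1}\rhd x_k\rhd x_{k+1}$ by a different maximal chain of this rank-$2$ interval whose first deleted position is strictly smaller than $\lambda_k$. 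Such a reroute leaves the labels $\lambda_1,\dots,\lambda_{k-1}$ untouched (the prefix and its reduced word $\mathbf{x}_{k-1}$ are unchanged) and strictly lowers the $k$-th label; the later labels are then irrelevant for the lexicographic comparison. This produces a chain $m'\prec m_\ast$, contradicting minimality, so no descent can occur and $\lambda(m_\ast)$ is increasing.

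The key step, and the main obstacle, is therefore the following rank-$2$ statement: \emph{in a Bruhat interval of length $2$ with the induced position labelling, a maximal chain whose two labels form a descent is never the one of smallest first label; in fact there is another maximal chain whose first deleted position is strictly smaller}. I would prove this by exploiting the dihedral nature of rank-$2$ intervals. Writing $p=x_{k+1}$, $r=x_{k-1}$, $\mathbf{r}=\mathbf{x}_{k-1}$, and $\mathbf{r}^{(i)}$ for the subword of $\mathbf{r}$ with position $i$ deleted, the two covers of $r$ inside $[p,r]$ are realized by reflections $t,t'$, and the subgroup they generate is a dihedral Coxeter system by Dyer's theorem; consequently $[p,r]$ is order-isomorphic to a length-$2$ interval inside it and is a \emph{diamond} with exactly two middle elements $\mathbf{r}^{(a_1)}$ and $\mathbf{r}^{(a_2)}$, hence exactly two maximal chains. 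These first labels are distinct, $a_1<a_2$, since $i\mapsto\mathbf{r}^{(i)}$ is injective and equal first labels would force equal middle elements (strong exchange property). It then remains to show that the descent chain is the one with first label $a_2$. When the two chains delete the same pair of positions this is immediate: they delete $\{a_1,a_2\}$ in opposite orders, so the chain starting at $a_1$ is increasing and the one starting at $a_2$ is the descent. The genuinely delicate case is when the two deleted position-sets differ; there I would work inside the dihedral subgroup and use the strong exchange property to produce, from a descent $(a,b)$ with $b<a$, a cover of $r$ lying above $p$ whose deleted position is strictly smaller than $a$. This dihedral bookkeeping is where essentially all the content sits, the reductions above being purely formal.
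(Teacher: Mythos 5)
Your formal reductions are sound: a lexicographically least chain $m_\ast$ exists by finiteness; if it is increasing, the uniqueness in the previous proposition identifies it with $m_0$; rerouting a descent through the length-$2$ interval $[x_{k+1},x_{k-1}]$ leaves $\lambda_1,\dots,\lambda_{k-1}$ untouched; and a strictly smaller $k$-th label decides the lexicographic comparison no matter what the tail does (a point worth making explicitly, since the two routes through the diamond can retain \emph{different} position sets and so produce different tails). Deriving the diamond property from the dihedral reflection subgroup rather than from Corollary~\ref{diam} is also the right instinct against circularity, though note that ``$[p,r]$ is order-isomorphic to an interval in $\langle t,t'\rangle$'' does not follow from Dyer's subgroup theorem alone (any group generated by two involutions is dihedral); it requires Dyer's Bruhat-graph results, i.e.\ essentially Theorem~\ref{subinterval} with $n=2$. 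For comparison: the paper itself offers no argument for this proposition --- it is quoted, like all of Section~\ref{basics}, from \cite{Bjorner}, whose proof rests on exactly the exchange-property lemma your sketch postpones.

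That postponed lemma is the genuine gap: your rank-$2$ claim \emph{is} the proposition for length-$2$ intervals, and your proposal proves it only when the two chains delete the same pair of positions. The delicate case is nonvacuous, and the natural candidate fails there. Concretely, in $A_2$ take $\mathbf{r}=s_1s_2s_1$ and $p=s_1$: the chain through $\mathbf{r}^{(3)}=s_1s_2$ has labels $(3,2)$, a descent with $b=2<a=3$, but deleting position $b=2$ from $\mathbf{r}$ gives $s_1s_1=e$, which is not a coatom of $[p,r]$; the other coatom is $\mathbf{r}^{(1)}=s_2s_1$ with first label $1<3$, so the claim holds, yet not via the deletion the descent hands you. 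The case you can close cheaply is $\ell(\mathbf{r}^{(b)})=m-1$: since $b<a$, the second reflection of the descent chain is the position-$b$ reflection $t_b$ of $\mathbf{r}$ itself, so $p=t_bt_ar$ and $p\,(\mathbf{r}^{(b)})^{-1}=t_bt_at_b$ is a reflection, whence $p\lhd\mathbf{r}^{(b)}\lhd r$ with label $b<a$. But when $\mathbf{r}^{(b)}$ is non-reduced, as in the example, no argument is offered, and that is where all the content sits --- it is handled in the literature either by the exchange lemma underlying the treatment in \cite{Bjorner} or by Dyer's reflection orderings restricted to the dihedral subgroup. Until that case is carried out, the proof is incomplete.
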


\begin{corollary} \label{diam}
Any length 2 Bruhat interval is isomorphic to $\vcenter{\xymatrix @-1pc {
& {\circ}& \\
{\circ}\ar@{-}[ur] & & {\circ} \ar@{-}[ul] \\
& {\circ} \ar@{-}[ur] \ar@{-}[ul] &}}$.
\end{corollary}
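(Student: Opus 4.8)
The plan is to recast the statement as a counting claim. By the Chain property the interval $[u,w]$ is graded by length, so it has bottom $u$, top $w$, and all of its remaining elements at the single intermediate length $\ell(u)+1$; hence $[u,w]$ is the displayed diamond precisely when $\#\{z : u<z<w\}=2$, and I would prove exactly this. (I write $u\lhd z$ for a cover and $D_L(\cdot)$ for left descent sets.) The engine is the \emph{lifting} behaviour of the Bruhat order under left multiplication by a simple reflection $s$: if $u<w$ and $s\in D_L(w)\setminus D_L(u)$, then $su\le w$ and $u\le sw$. This is a short consequence of the Strong exchange property, so I would record it first. Then I fix any $s\in D_L(w)$, which exists since $\ell(w)>0$.

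First I would treat the favourable case $s\notin D_L(u)$. Lifting gives $su\le w$ and $u\le sw$; since $\ell(su)=\ell(u)+1=\ell(sw)$, both $su$ and $sw$ lie strictly between $u$ and $w$ at the intermediate length, and they are distinct because $su=sw$ forces $u=w$. For uniqueness, let $z$ satisfy $u\lhd z\lhd w$. If $s\in D_L(z)$, applying lifting to $u<z$ gives $u\le sz$ with $\ell(sz)=\ell(u)$, whence $u=sz$, i.e. $z=su$; if $s\notin D_L(z)$, applying lifting to $z<w$ gives $sz\le w$ with $\ell(sz)=\ell(w)$, whence $sz=w$, i.e. $z=sw$. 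Thus the intermediate elements are exactly $su$ and $sw$, and this case closes with no induction.

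The remaining case $s\in D_L(u)$ (so $s\in D_L(u)\cap D_L(w)$) I would handle by induction on $\ell(w)$. Stripping $s$ sends $[u,w]$ to the strictly shorter length-$2$ interval $[su,sw]$ (that $u\le w$ with a common left descent yields $su\le sw$ follows again from the exchange and subword properties), which is a diamond by the inductive hypothesis since $\ell(sw)<\ell(w)$. The correspondence $z\mapsto sz$ carries each intermediate element $z$ of $[u,w]$ with $s\in D_L(z)$ to an intermediate element of $[su,sw]$; moreover any intermediate $z$ with $s\notin D_L(z)$ must equal $sw$, exactly as in the previous case, so at most one intermediate element escapes the correspondence. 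Transferring the count through this correspondence, with the boundary elements $u$ and $sw$ absorbing the off-by-one discrepancies, returns $\#\{z:u<z<w\}=2$.

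I expect the genuine obstacle to be precisely this last bookkeeping: the map $z\mapsto sz$ is \emph{not} a clean bijection between the two intervals' intermediate elements, and one must track, in two sub-cases according to whether $u\le sw$ holds, how $u$ and $sw$ play the role of the extra element on either side. Both sub-cases really occur: for $u=s_1<s_1s_2s_1$ one finds $u\le sw$, whereas for $u=s_1<s_1s_2s_3$ one finds $u\not\le sw$. It is worth flagging that the seductive shortcut of reinserting one of the two omitted letters of a reduced subword for $u$ fails, since the two intermediate elements may come from \emph{different} reduced subwords of $u$; for the same reason a conceptually cleaner route is to invoke Dyer's theorem, passing to the rank-$2$ dihedral reflection subgroup $W'=\langle t_1,t_2\rangle$ generated by the reflections of one maximal chain $u\lhd ut_1\lhd ut_1t_2=w$, observing that $[u,w]$ sits inside the coset $uW'$ and inherits the dihedral Bruhat order, in which every length-$2$ interval is visibly a diamond, at the cost of justifying the compatibility of Bruhat order with cosets of a reflection subgroup.
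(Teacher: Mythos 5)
Your proposal is correct, but it takes a genuinely different route from the paper. The paper presents this statement as a corollary of the two propositions immediately preceding it --- the chain labelling $\lambda$ on $\mathcal{M}(u,w)$ and the existence, uniqueness, and lexicographic minimality of the increasing chain $m_0$ --- so that in a length-$2$ interval, where maximal chains are in bijection with intermediate elements, the shelling-type machinery (together with its dual) pins the number of chains at exactly two; the details are deferred to the first two sections of \cite{Bjorner}. You instead prove the counting claim $\#\{z : u<z<w\}=2$ directly from the lifting property, with induction on $\ell(w)$ only in the case $s\in D_L(u)\cap D_L(w)$. Your Case 1 is complete as written. In Case 2 the bookkeeping you flag as the genuine obstacle does close, though you leave the decisive sub-lemma tacit: if $y$ is intermediate in $[su,sw]$ with $s\in D_L(y)$, then lifting applied to the pair $su<y$ yields $u\le y$, forcing $y=u$ by length; and every intermediate $y$ with $s\notin D_L(y)$ lifts, by the monotone form of lifting ($a\le b$, $sa>a$, $sb>b$ imply $sa\le sb$, from the subword property), to an intermediate $sy$ of $[u,w]$. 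Hence $z\mapsto sz$ is a bijection from $\{z : u<z<w,\ s\in D_L(z)\}$ onto $\{y : su<y<sw,\ s\notin D_L(y)\}$, and in both of your sub-cases the total is two: if $u\le sw$ then $sw$ is the unique intermediate of $[u,w]$ escaping the correspondence while $u$ is the unique intermediate of $[su,sw]$ outside its image, and if $u\not\le sw$ the correspondence is already a bijection of two-element sets --- exactly as your examples $s_1<s_1s_2s_1$ and $s_1<s_1s_2s_3$ illustrate. As for what each approach buys: yours is elementary and self-contained, never fixing a reduced word or a labelling; the paper's route costs nothing extra in context, since the same labelling machinery yields Corollary~\ref{shellable}, which drives Lemma~\ref{techlem1} and hence the main theorems, and your alternative through Dyer's reflection subgroups is likewise consonant with the paper's Theorem~\ref{subinterval}, at the price of the coset-compatibility facts you correctly identify.
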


\begin{corollary} \label{shellable}
Let $m, m' \in \mathcal{M}(u,w)$, $m' \prec m$. Then there exists $k \in \mathcal{M}(u,w)$ such that $k \prec m$, $m' \cap m \subset k \cap m$ and $|k\cap m| = |m| - 1$.
\end{corollary}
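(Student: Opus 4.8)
The statement is the classical lexicographic–shellability condition for the labelling $\lambda$, so the plan is to treat $\lambda$ as an EL-type labelling and to exploit the two preceding propositions (every interval has a unique increasing maximal chain, and that chain is the $\prec$-least one), applied not to $[u,w]$ itself but to a carefully chosen subinterval. Throughout I write the two chains as descending chains $w = x_0 \rhd x_1 \rhd \dots \rhd x_n = u$ and $w = x_0' \rhd \dots \rhd x_n' = u$ for $m$ and $m'$, and I use repeatedly that the elements of a maximal chain have pairwise distinct lengths, so an element of length $\ell(w)-i$ can occupy only position $i$ in a chain.

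First I would record two local facts. Since $m'\prec m$, let $r$ be the first index at which $\lambda(m')$ and $\lambda(m)$ differ; then the deletions agree up to step $r-1$, so $x_i=x_i'$ for $i<r$, while $\lambda_r(m')<\lambda_r(m)$ forces $x_r\neq x_r'$ (distinct deletion positions in a fixed reduced word of $x_{r-1}$ give distinct reflections, hence distinct covers). Next let $s$ be the least index $\geq r$ with $x_s=x_s'$; it exists since $x_n=u=x_n'$, and by minimality $x_i\neq x_i'$ for every $r\leq i<s$, while $x_r\neq x_r'$ gives $s\geq r+1$. The role of $s$ is that both $m$ and $m'$ restrict to maximal chains of the subinterval $[x_s,x_{r-1}]$, pinned together at the endpoints $x_{r-1}$ and $x_s$.

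The heart of the argument is to produce a descent of $m$ inside the window $[r,s-1]$. Reading the labels $\lambda_r,\dots,\lambda_s$ with respect to the reduced word of $x_{r-1}$ inherited from $m$, the restrictions of $m$ and $m'$ to $[x_s,x_{r-1}]$ carry the strings $(\lambda_r(m),\dots,\lambda_s(m))$ and $(\lambda_r(m'),\dots,\lambda_s(m'))$, and $\lambda_r(m')<\lambda_r(m)$ shows the second is $\prec$-smaller. Hence the $m$-restriction is not the least chain of $[x_s,x_{r-1}]$, so by the preceding propositions it is not the unique increasing chain, and its label string therefore has a descent: there is $j$ with $r\leq j\leq s-1$ and $\lambda_j(m)>\lambda_{j+1}(m)$. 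Crucially $r\leq j\leq s-1$ gives $x_j\neq x_j'$, hence $x_j\notin m'$.

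It then remains to swap at $j$. The interval $[x_{j+1},x_{j-1}]$ has length $2$, so by Corollary \ref{diam} it is a diamond with exactly one cover of $x_{j+1}$ other than $x_j$; let $k$ be obtained from $m$ by replacing $x_j$ with that other middle element $y_j$. Since the $m$-segment of this diamond is decreasing it is not the unique increasing (hence least) chain of the diamond, so the $k$-segment is, whence $\lambda_j(k)<\lambda_j(m)$; as $k$ and $m$ agree before step $j$, this yields $k\prec m$. Finally $k$ and $m$ differ only in position $j$, so $|k\cap m|=|m|-1$, and since $x_j\notin m'$ we get $m'\cap m\subseteq m\setminus\{x_j\}=k\cap m$, as required. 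I expect the only delicate point to be precisely this localization: a descent of $m$ chosen anywhere in $[u,w]$ need not satisfy $x_j\notin m'$, and it is the passage to the subinterval $[x_s,x_{r-1}]$ that forces the descent into the window where $m$ and $m'$ genuinely disagree.
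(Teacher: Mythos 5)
Your proof is correct and is essentially the canonical argument: the paper states Corollary \ref{shellable} without proof, deferring (like all of Section \ref{basics}) to Bj\"orner--Brenti, and the proof there derives the shellability condition from the two preceding propositions exactly as you do --- first point of lexicographic disagreement, restriction to the subinterval $[x_s,x_{r-1}]$ (with the inherited reduced subword) to force a descent into the window where $m$ and $m'$ genuinely differ, then a swap across the length-$2$ diamond using Corollary \ref{diam}. The only caveat is that $\subset$ in the statement must be read as $\subseteq$: when $m$ and $m'$ differ in a single element your construction returns $k=m'$ with $k\cap m=m'\cap m$, and indeed strict containment is then impossible since $|k\cap m|=|m|-1$ forces equality, so the non-strict reading is the intended one.
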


\begin{theorem} \label{subinterval}
Let $(W,S)$ be a finite Coxeter system and $(u,v)$ - an open Bruhat interval in $W$ of length $n$. Then there exists a reflection subgroup $(W',S')$ in $(W,S)$ and a Bruhat interval $(u',v')$ in $(W',S')$ such that $|S'| \leq n$ and $(u',v')$ is isomorphic to $(u,v)$. 
\end{theorem}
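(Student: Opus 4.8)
The plan is to build $(W',S')$ as a reflection subgroup attached to a single maximal chain of the interval, to realise $[u,v]$ as a \emph{lower} interval of that subgroup, and to read off the rank bound from the roots of the chain.

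First I would fix a maximal chain $u = x_0 \lhd x_1 \lhd \cdots \lhd x_n = v$ in $[u,v]$. For each step the relation $x_{i-1}\to x_i$ provides, by definition, a reflection $t_i\in T$ with $x_i = x_{i-1}t_i$, so $u^{-1}v = t_1t_2\cdots t_n$. Put $W' = \langle t_1,\dots,t_n\rangle$. By Dyer's theorem (the cited result) there is a canonical set $S'$ with $(W',S')$ a Coxeter system, and the rank bound is then immediate: each $t_i$ corresponds to a root $\beta_i$, the whole root system of $W'$ lies in $\operatorname{span}(\beta_1,\dots,\beta_n)$ because every generating reflection preserves this span, and hence $|S'| = \operatorname{rank}(W') = \dim\operatorname{span}(\text{roots of }W')\le n$.

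It remains to identify the interval, and this is the heart of the matter. The natural candidate is the lower interval $[e,\,u^{-1}v]$ computed \emph{intrinsically} in $(W',S')$, with $(u',v')=(e,u^{-1}v)$. I would \emph{not} try to use the translation $x\mapsto u^{-1}x$: it is not order preserving, as already a length-$2$ interval shows (the diamond $[s,w_0]$ in type $A_2$ is sent to a $4$-chain). Instead I would re-base the Bruhat order at $u$, treating $u$ as the fundamental chamber: with respect to this re-based order $u$ becomes the minimum, the covering data of the principal filter of $u$ inside $W'$ matches that of $[u,v]$, and one obtains an isomorphism of posets $[u,v]\cong[e,u^{-1}v]_{W'}$. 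To make this precise I would fix a reduced word $v=s_1\cdots s_q$ together with a reduced subword for $u$ and translate the deletion process defining the $\lambda$-labelling of $\mathcal M(u,v)$ into genuine covering steps of $(W',S')$; the chain property guarantees that $u^{-1}v$ has intrinsic length exactly $n$, so that the lower interval really has length $n$.

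The main obstacle is precisely this last identification: proving that ambient Bruhat comparabilities among the elements of $[u,v]$ coincide with the intrinsic comparabilities of $W'$ after re-basing at $u$. The difficulty is that passage to a reflection subgroup changes the length function, so an ambient relation $x<y$ need not survive as an intrinsic relation and conversely (the same small-rank computation shows that a chain and a diamond can sit on the same four group elements under the two orders). Controlling this requires a careful analysis --- via the strong exchange property and a convexity argument for the set of reflections crossed inside $[u,v]$ --- of exactly which reflections label covers in $W'$, and it is here that the bulk of the work lies; the construction of $W'$ and the rank estimate are comparatively routine.
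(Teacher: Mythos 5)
Your first two steps --- the maximal chain, $W'=\langle t_1,\dots,t_n\rangle$, Dyer's canonical generators $S'$, and the span-of-roots argument for $|S'|\le n$ --- are sound, and they match the opening of the argument the paper relies on (the paper gives no proof of its own; the theorem is quoted from Dyer's Bruhat-graph paper \cite{Dyer}). The genuine gap is your identification of the interval: the candidate $(u',v')=(e,u^{-1}v)$ is wrong, and in particular the assertion that ``the chain property guarantees that $u^{-1}v$ has intrinsic length exactly $n$'' is false --- the chain only bounds the \emph{reflection} length of $u^{-1}v=t_1\cdots t_n$ by $n$, not its Coxeter length in $(W',S')$. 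Concretely, in the finite Coxeter group $W=I_2(5)=\langle a,b\rangle$ take $u=ab$ and the chain $ab \lhd bab \lhd baba=v$; its right labels are $t_1=u^{-1}(bab)=ababa=w_0$ and $t_2=a$, so $W'=\langle w_0,a\rangle=W$ and $u^{-1}v=t_1t_2=abab$ has intrinsic length $4$. Hence $[e,u^{-1}v]_{W'}$ is an eight-element interval of length $4$, while $[u,v]$ is a four-element diamond of length $2$. Setting $u'=e$ silently assumes that $u$ is the minimal-length element of the coset $uW'$, which fails here (the minimal representative is $e\neq u$); your own diamond-versus-chain example for the translation $x\mapsto u^{-1}x$ is precisely this failure, and it is not cured by passing to the intrinsic order of $W'$.

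The repair, which is how Dyer actually argues, replaces $(e,u^{-1}v)$ as follows: let $w_m$ be the unique minimal-length representative of the coset $uW'$ (characterized by $\ell(w_m t)>\ell(w_m)$ for all $t\in T\cap W'$), write $u=w_m a$ and $v=w_m b$ with $a,b\in W'$, and set $(u',v')=(a,b)$. Two lemmas then carry the real content you correctly sensed was missing: (i) for $x,y\in W'$ one has $x\le' y$ in $(W',S')$ if and only if $w_m x\le w_m y$ in $W$, so $z\mapsto w_m^{-1}z$ is an order isomorphism from the coset $w_mW'$ (with the induced Bruhat order) onto $W'$; and (ii) every reflection labelling a Bruhat-graph edge between two elements of $[u,v]$ already lies in $W'$ --- equivalently, the labels along any one maximal chain generate the same reflection subgroup as the labels of all edges --- which is what guarantees $[u,v]\subseteq w_mW'$ and hence that $z\mapsto w_m^{-1}z$ maps $[u,v]$ bijectively onto $[a,b]_{W'}$. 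Your closing paragraph locates the difficulty in the right place (ambient versus intrinsic comparabilities), but ``re-basing at $u$'' with target $[e,u^{-1}v]$ cannot be made to work; re-basing at $w_m$ can, and the comparison of the two orders is then exactly lemma (i).
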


\begin{corollary}
The number of distinct with respect to isomorphisms Bruhat intervals of length $n$ in finite Coxeter groups is finite.
\end{corollary}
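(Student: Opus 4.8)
The plan is to deduce this finiteness statement directly from Theorem \ref{subinterval} together with the proposition asserting that, for a fixed number of generators, there are only finitely many Coxeter systems up to isomorphism. The key idea is that Theorem \ref{subinterval} collapses the a priori unbounded family of ambient finite Coxeter groups into a bounded-rank family, after which a simple counting argument finishes the job.

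Concretely, I would fix $n$ and let $(W,S)$ be an arbitrary finite Coxeter system with $(u,v)$ a Bruhat interval of length $n$ inside it. First I would invoke Theorem \ref{subinterval} to replace $(u,v)$ by an isomorphic interval $(u',v')$ lying in a reflection subgroup $(W',S')$ with $|S'|\le n$. Since $W'$ is generated by reflections of the finite group $W$, it is itself finite, so $(W',S')$ is a finite Coxeter system of rank at most $n$. Thus every length-$n$ interval, as $(W,S)$ ranges over all finite Coxeter systems, is isomorphic to one occurring inside a finite Coxeter system whose rank is bounded by $n$.

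Next I would apply the proposition on the finiteness of Coxeter systems: ranging over ranks $0,1,\dots,n$, there are only finitely many isomorphism types of finite Coxeter systems with at most $n$ generators; fix representatives $(W_1,S_1),\dots,(W_N,S_N)$. Each $W_j$ is a finite set, hence contains only finitely many pairs $(u',v')$ and therefore only finitely many Bruhat intervals. Taking the union over $j=1,\dots,N$ produces a finite collection of posets that, by the previous step, contains a representative of every isomorphism class of length-$n$ interval, and finiteness follows.

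One point to tidy up is that Theorem \ref{subinterval} is phrased for open intervals, whereas the corollary concerns closed Bruhat intervals $[u,w]$; since a closed interval is recovered from its open part by adjoining a unique bottom and top element, and conversely the extremes of a closed interval are intrinsic as its minimum and maximum, the two classification problems coincide and finiteness of one yields finiteness of the other. The only genuine obstacle is the bounding of the rank: without Theorem \ref{subinterval} the ambient groups would range over all finite Coxeter systems, of which there are infinitely many of arbitrarily large rank, so the finiteness proposition alone would not suffice. The entire force of the argument therefore rests on that theorem, and the remaining steps amount to routine counting.
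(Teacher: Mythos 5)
Your proposal is correct and follows exactly the route the paper intends: the corollary is stated immediately after Theorem \ref{subinterval} and is meant to follow from it together with the earlier proposition that there are only finitely many (finite) Coxeter systems with a bounded number of generators, each of which, being a finite group, contains only finitely many intervals. Your additional remarks --- that the reflection subgroup $W'$ inherits finiteness from $W$, and that classifying open intervals is equivalent to classifying closed ones since the endpoints are recovered as minimum and maximum --- are tidy-ups of details the paper leaves implicit, not deviations from its argument.
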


\subsection{Weyl groups}
For any vector $\alpha \in \mathbb{E}^d$, let us denote by $\sigma_{\alpha}$ the reflection against the hyperplane orthogonal to $\alpha$.
\begin{definition}
A crystallographic root system is a finite set $\Phi \subset \mathbb{E}^d \ \{0\}$ such that $\Phi$ spans $\mathbb{E}^d$ and for any $\alpha,\beta \in Phi$, the following conditions hold:
\begin{align*}
(1)& \ \Phi \cap \mathbb{R}\alpha = \{\alpha, -\alpha\},\\
(2)& \ \sigma_{\alpha}(\Phi) = \Phi, \\
(3)& \ \sigma_{\alpha}(\beta) = \beta + n\alpha, \ n \in \mathbb{Z}.
\end{align*}
\end{definition}

\begin{stat}
Any Weyl group is a Coxeter group.
\end{stat}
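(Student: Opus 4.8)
The plan is to exhibit an explicit Coxeter generating set inside the reflection group and to verify the defining axioms of a Coxeter system geometrically. Throughout I take the Weyl group to be $W = \langle \sigma_\alpha : \alpha \in \Phi \rangle$, the group generated by all the reflections attached to the roots.

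First I would fix a generic vector $\lambda \in \mathbb{E}^d$ lying on none of the hyperplanes $\alpha^\perp$, and use it to split the root system as $\Phi = \Phi^+ \sqcup \Phi^-$ into positive roots (those $\alpha$ with $\langle \lambda, \alpha \rangle > 0$) and negative roots. Calling a positive root \emph{simple} if it cannot be written as a sum of two positive roots, I would let $\Delta$ denote the set of simple roots and set $S = \{\sigma_\alpha : \alpha \in \Delta\}$. A short computation using condition $(3)$ of the root system definition shows that any two distinct simple roots subtend a non-acute angle; from this I would deduce that $\Delta$ is linearly independent and that every positive root is a non-negative integer combination of simple roots.

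The geometric heart of the argument is the positivity lemma: for $\alpha \in \Delta$ the reflection $\sigma_\alpha$ sends $\alpha$ to $-\alpha$ but permutes the remaining positive roots $\Phi^+ \setminus \{\alpha\}$ among themselves. Granting this, I would define the length of $w \in \langle S \rangle$ to be $\ell(w) = |\{\beta \in \Phi^+ : w\beta \in \Phi^-\}|$ and show $\ell(w\sigma_\alpha) = \ell(w) \pm 1$ for every $\alpha \in \Delta$. An induction on this length then establishes generation: any $\sigma_\beta$ with $\beta \in \Phi^+$ is conjugate, by an element of $\langle S \rangle$, to a simple reflection, so in fact $\langle S \rangle = W$ and $T$ coincides with the set of all $\sigma_\beta$.

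Finally I would verify the exchange condition: if $\ell(\sigma_\alpha w) \le \ell(w)$ for a reduced expression $w = \sigma_{\beta_1}\cdots \sigma_{\beta_k}$ in the generators from $S$, then $\sigma_\alpha w$ is obtained from it by deleting a single letter. This is precisely the situation of the strong exchange property recalled above, and the standard characterization of Coxeter systems by the exchange condition yields that $(W,S)$ is a Coxeter system, with relations $\sigma_\alpha^2 = e$ together with the braid relations $(\sigma_\alpha \sigma_\beta)^{m(\alpha,\beta)} = e$, where $m(\alpha,\beta)$ is the order of $\sigma_\alpha \sigma_\beta$. The hard part will be the positivity lemma and, relatedly, the assertion that the braid relations exhaust all relations among the generators; both are handled by analyzing the simply transitive action of $W$ on the connected components (Weyl chambers) of $\mathbb{E}^d \setminus \bigcup_\alpha \alpha^\perp$.
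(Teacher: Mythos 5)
The paper never actually proves this proposition: like the other facts in Section \ref{basics}, it is quoted from the first chapters of Bjorner--Brenti, so your argument is by construction a different route. What you sketch is the classical geometric development (Bourbaki, or Humphreys' \emph{Reflection Groups and Coxeter Groups}, \S1): a generic functional cuts $\Phi$ into $\Phi^+\sqcup\Phi^-$, the indecomposable positive roots form a simple system $\Delta$, the positivity lemma says $\sigma_\alpha$ with $\alpha\in\Delta$ permutes $\Phi^+\setminus\{\alpha\}$, length is the inversion count, and generation plus the exchange condition plus the standard characterization of Coxeter systems by the exchange condition finish the proof. This is sound and in fact buys more than the bare statement: an explicit Coxeter generating set $S$, the identification of the reflection set $T$ with $\{\sigma_\beta : \beta\in\Phi\}$ via $w\sigma_\beta w^{-1}=\sigma_{w\beta}$, and the formula $\ell(w)=|\{\beta\in\Phi^+ : w\beta\in\Phi^-\}|$, all of which the paper later uses tacitly. (Note that condition $(3)$ of the root-system definition is only needed for integrality; your argument proves the statement for every finite reflection group.)

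One step, as written, is circular and needs repair. You justify the exchange condition by saying it ``is precisely the situation of the strong exchange property recalled above''; but that proposition, as stated in the preliminaries, is a property \emph{of Coxeter systems}, so invoking it for $(W,S)$ presupposes exactly what you are proving --- and tellingly, your closing list of ``hard parts'' omits the exchange verification. The fix is short and uses only your positivity lemma: if $\alpha\in\Delta$ and $\ell(\sigma_\alpha w)\le\ell(w)$ for a reduced word $w=\sigma_{\beta_1}\cdots\sigma_{\beta_k}$, then $w^{-1}\alpha\in\Phi^-$; among the roots $\gamma_i=\sigma_{\beta_i}\cdots\sigma_{\beta_1}(\alpha)$, with $\gamma_0=\alpha\in\Phi^+$ and $\gamma_k=w^{-1}\alpha\in\Phi^-$, take the first index $i$ with $\gamma_i\in\Phi^-$; since $\sigma_{\beta_i}$ sends only $\beta_i$ out of $\Phi^+$, we get $\gamma_{i-1}=\beta_i$, and conjugating by $u=\sigma_{\beta_{i-1}}\cdots\sigma_{\beta_1}$ gives $\sigma_{\beta_i}=u\sigma_\alpha u^{-1}$, hence $\sigma_\alpha w=\sigma_{\beta_1}\cdots\hat{\sigma}_{\beta_i}\cdots\sigma_{\beta_k}$. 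Finally, your last paragraph double-counts: once the exchange condition is verified, the characterization theorem already delivers the presentation by braid relations, so the chamber-geometry proof that the braid relations exhaust all relations is an \emph{alternative} to the exchange route, not an additional requirement; if you choose the chamber route instead, establish simple transitivity on chambers before, not from, the presentation.
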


\begin{stat}  \label{weylsub}
A subgroup of a Weyl group $W$, generated by a subset of the reflection set $T$, is also a Weyl group.
\end{stat}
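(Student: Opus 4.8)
The plan is to construct a crystallographic root system directly for $W'$ rather than merely invoking its Coxeter structure. By Dyer's theorem $(W',S')$ is already a Coxeter system, but being a Weyl group is a stronger, geometric condition: I must exhibit a finite crystallographic root system whose associated reflection group is $W'$. Since $W$ is a Weyl group it carries a crystallographic root system $\Phi \subset \mathbb{E}^d$, and every reflection $t \in T$ is of the form $\sigma_{\alpha}$ for some $\alpha \in \Phi$, unique up to sign. The natural candidate is therefore $\Phi' = \{\alpha \in \Phi \mid \sigma_{\alpha} \in W'\}$, and I would work in the subspace $V' = \mathrm{span}_{\mathbb{R}}(\Phi') \subseteq \mathbb{E}^d$, on which each $\sigma_{\alpha}$ with $\alpha \in \Phi'$ restricts to a genuine reflection.

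First I would verify the three axioms of a crystallographic root system for $\Phi'$ inside $V'$. Finiteness is immediate since $\Phi' \subseteq \Phi$, and $\Phi'$ spans $V'$ by the very definition of $V'$. Axiom (1), namely $\Phi' \cap \mathbb{R}\alpha = \{\alpha, -\alpha\}$, and axiom (3), $\sigma_{\alpha}(\beta) = \beta + n\alpha$ with $n \in \mathbb{Z}$, are inherited verbatim from $\Phi$, because the vectors involved already lie in $\Phi$ and the reflections act by the same formulas; in particular $\alpha \in \Phi'$ forces $-\alpha \in \Phi'$ as $\sigma_{\alpha} = \sigma_{-\alpha}$.

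The one step that genuinely uses the group structure, and where I expect the real content to sit, is axiom (2), the closure $\sigma_{\alpha}(\Phi') = \Phi'$. Here I would invoke the conjugation identity $\sigma_{\alpha} \sigma_{\beta} \sigma_{\alpha}^{-1} = \sigma_{\sigma_{\alpha}(\beta)}$, valid because $\sigma_{\alpha}$ is orthogonal. For $\alpha, \beta \in \Phi'$ we have $\sigma_{\alpha}, \sigma_{\beta} \in W'$, so $\sigma_{\sigma_{\alpha}(\beta)} \in W'$ as $W'$ is a subgroup; by the definition of $\Phi'$ this forces $\sigma_{\alpha}(\beta) \in \Phi'$. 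Thus $\Phi'$ is stable under each $\sigma_{\alpha}$, and consequently each $\sigma_{\alpha}$ preserves $V'$, so its restriction to $V'$ is well defined.

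Finally I would identify $W'$ with the Weyl group of $\Phi'$. By hypothesis $W'$ is generated by a subset of $T$; each such generator is some $\sigma_{\alpha} \in W'$, hence $\alpha \in \Phi'$, giving $W' \subseteq \langle \sigma_{\alpha} : \alpha \in \Phi' \rangle$. Conversely every $\sigma_{\alpha}$ with $\alpha \in \Phi'$ lies in $W'$ by construction, so the reverse inclusion holds and $W' = \langle \sigma_{\alpha} : \alpha \in \Phi' \rangle$. This is exactly the reflection group of the crystallographic root system $\Phi'$, which is the definition of a Weyl group. The only subtlety to handle with care is the passage to the span $V'$, ensuring that $\Phi'$ genuinely spans its ambient space as the definition of a root system requires; everything else is inherited from $\Phi$ or follows from the closure of $W'$ under products and inverses.
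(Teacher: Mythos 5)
Your proof is correct, and there is in fact no in-paper argument to compare it against: the paper states this proposition among the preliminaries, deferring all proofs to Björner--Brenti \cite{Bjorner}, and separately quotes only the weaker combinatorial fact (reflection subgroups of Coxeter systems are Coxeter systems) as Dyer's theorem from \cite{Dyer2}. What you supply is the standard geometric argument that upgrades that combinatorial statement to the crystallographic setting, and it is sound. Your key choice, $\Phi' = \{\alpha \in \Phi \mid \sigma_\alpha \in W'\}$ --- all roots whose reflections lie in $W'$, rather than merely the $W'$-orbit of the roots of the given generators --- is exactly the right one: it makes axiom (2) fall out of the conjugation identity $\sigma_\alpha \sigma_\beta \sigma_\alpha^{-1} = \sigma_{\sigma_\alpha(\beta)}$ together with closure of $W'$ under conjugation, and it makes the equality $W' = \langle \sigma_\alpha : \alpha \in \Phi' \rangle$ automatic in both directions, with no need for Dyer's description of the reflection set of $W'$. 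The one point you gesture at but do not quite nail down is why $W'$ itself, and not merely its image under restriction, is the Weyl group of $\Phi'$: you need the restriction map $W' \to \mathrm{GL}(V')$ to be injective. This holds because each generator $\sigma_\alpha$ with $\alpha \in \Phi' \subset V'$ fixes the orthogonal complement of $V'$ pointwise, hence so does every element of $W'$, so an element restricting to the identity on $V'$ is the identity on all of $\mathbb{E}^d$. With that one line added (and noting the trivial edge case $W' = \{e\}$, $\Phi' = \emptyset$), your proof is complete, and it is essentially the argument found in the standard references the paper cites.
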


\begin{stat}  \label{weylclass}
Weyl groups correspond to Coxeter diagrams of the types  $A_n$, $B_n$, $D_n$, $E_6$, $E_7$, $E_8$, and $I_2(6)$.
\end{stat}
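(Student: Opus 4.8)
The plan is to reduce the statement to the classical classification of positive-definite Coxeter diagrams and then use the crystallographic condition (3) to decide exactly which of them arise from root systems. \textbf{First reduction.} By the proposition that every Weyl group is a Coxeter group, $W = W(\Phi)$ is generated by the reflections $\sigma_\alpha$ attached to a set of simple roots $\Delta \subset \Phi$, and it carries a well-defined Coxeter diagram. A standard argument shows that $\Phi$ splits as an orthogonal disjoint union $\Phi = \Phi_1 \sqcup \cdots \sqcup \Phi_r$ of irreducible root systems, with $W$ the direct product of the $W(\Phi_i)$ and the diagram of $W$ the disjoint union of the diagrams of the factors. Hence it suffices to classify the \emph{connected} diagrams coming from irreducible crystallographic systems.

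\textbf{Reading off the admissible labels.} Since $\sigma_\alpha(\beta) = \beta - \frac{2(\beta,\alpha)}{(\alpha,\alpha)}\,\alpha$, condition (3) is equivalent to requiring every Cartan integer $c_{\beta\alpha} = \frac{2(\beta,\alpha)}{(\alpha,\alpha)}$ to lie in $\mathbb{Z}$. For two roots $\alpha,\beta$ with $\beta\neq\pm\alpha$ meeting at an angle $\theta$ one then has the integer product $c_{\alpha\beta}\,c_{\beta\alpha} = 4\cos^2\theta$, which is non-negative and strictly less than $4$ (the value $4$ would force $\beta=\pm\alpha$, excluded by (1)). Thus $4\cos^2\theta\in\{0,1,2,3\}$. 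Applying this to a pair of simple roots $\alpha,\beta$, which meet at the obtuse angle $\pi-\tfrac{\pi}{m}$ where $m$ is the order of $\sigma_\alpha\sigma_\beta$ (i.e. the diagram label), the constraint $4\cos^2(\pi-\tfrac{\pi}{m})\in\{0,1,2,3\}$ forces $m\in\{2,3,4,6\}$. In particular the labels $5$ and $\geq 7$ cannot occur.

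\textbf{Applying the Coxeter classification.} I would then invoke the classification of finite Coxeter groups — equivalently, of positive-definite Coxeter diagrams — obtained by the usual combinatorial analysis of the associated symmetric bilinear form (ruling out cycles, bounding vertex degrees and branch lengths, and eliminating the forbidden subdiagrams). This produces the list $A_n$, $B_n$, $D_n$, $E_6$, $E_7$, $E_8$, $F_4$, $H_3$, $H_4$ and $I_2(m)$. Keeping only the diagrams all of whose labels lie in $\{2,3,4,6\}$ discards $H_3,H_4$ (label $5$) and $I_2(m)$ with $m=5$ or $m\geq 7$, and leaves precisely the types in the statement, together with $F_4$ (label $4$) and with the identification $I_2(6)=G_2$.

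\textbf{Main obstacle.} The remaining and most substantial point is realizability: one must verify that each surviving diagram genuinely comes from a crystallographic root system, so that the classification is an equality and not merely an inclusion. I would settle this by exhibiting explicit systems — $A_n$ in the hyperplane $\{\sum x_i = 0\}\subset\mathbb{R}^{n+1}$, the standard short-and-long configurations for $B_n/C_n$ and $D_n$ in $\mathbb{R}^n$, and the known realizations of the exceptional systems — and checking (1)–(3) directly in each case. The genuinely hard, non-routine input is the positive-definiteness classification cited above; by contrast the crystallographic restriction to labels $\{2,3,4,6\}$ is the short computation of the second paragraph.
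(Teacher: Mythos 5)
Your proposal is mathematically sound and follows the classical route. Note first that the paper contains no proof of this proposition at all: as announced at the start of Section~1, the preliminaries are quoted from Bj\"orner--Brenti \cite{Bjorner}, so the relevant comparison is with that standard argument --- which is exactly the one you outline (reduction to irreducible systems, the crystallographic restriction of edge labels to $\{2,3,4,6\}$ via $c_{\alpha\beta}c_{\beta\alpha}=4\cos^2\theta\in\{0,1,2,3\}$, intersection with the positive-definite Coxeter classification, then realizability by explicit root systems). The substantive point is the one your own computation surfaces: carried out honestly, the argument leaves $F_4$ on the list, and $F_4$ is genuinely realizable as a crystallographic root system (e.g.\ $\pm e_i$, $\pm e_i\pm e_j$, $\tfrac12(\pm e_1\pm e_2\pm e_3\pm e_4)$ in $\mathbb{R}^4$, with all diagram labels in $\{3,4\}$), so it cannot be discarded. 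What you have proved is therefore the corrected statement --- the Weyl diagrams are $A_n$, $B_n$ ($=C_n$ as a diagram), $D_n$, $E_6$, $E_7$, $E_8$, $F_4$ and $I_2(6)=G_2$ --- and the proposition as printed, which omits $F_4$, is false as written; this matches the list in \cite{Bjorner}. The discrepancy is a gap in the paper, not in your argument, and it propagates: the enumeration procedure of Section~2.3 combines this proposition with Theorem~\ref{subinterval}, which only guarantees that a length-$n$ interval is isomorphic to one inside \emph{some} reflection subgroup of rank $\leq n$, and for $n\geq 4$ that subgroup may be of type $F_4$, so $F_4$ must be included in the search. Two cosmetic remarks: for $m=2$ the angle $\pi-\pi/m$ is right rather than obtuse, and your final realizability step, though routine, is genuinely needed --- the statement asserts a correspondence, not merely an inclusion --- so do not drop it.
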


\subsection{Explicit description of short intervals in Weyl groups} \label{overview}
Hultman describes Bruhat intervals of length 4 up to an isomorphism for all Weyl groups and Bruhat intervals of length 5 for the symmetric groups in~\cite{Hultman,Hultman2}. In order to describe intervals of length 4 Hultman uses the relation between the order complex of a Bruhat interval and cell decompositions of a sphere. Straightforward computations in Maple are used to check this result, as well as obtain the result for intervals of length 5 in the symmetric groups.

 \section{Invariant on Bruhat intervals} \label{results}
 
 In this section, we construct a correspondence between Bruhat intervals and special arrays consisting of graph isomorphisms and natural numbers. Let us first give a short informal description of the proposed construction:
 
 The construction is inductive. Suppose that we have classified Bruhat intervals in $W$ of length $n-1$ and enumerated the equivalency classes. Then for an interval of length $n$, one can find a number of equivalency classes of its length $n-1$ subintervals with the same smallest element as the bigger interval.
 From these numbers and some additional information, one can "glue together" representatives of the respective equivalency classes and get an interval, isomorphic to the original. So the invariant for each Bruhat interval $[u,v]$ of the length $n$ consists of a set of Bruhat intervals of length $n-1$ and an instruction how to make an interval, isomorphic to $[u,v]$ out of them.
 This set of numbers and the additional information that will be described below together make up the suggested invariant. 

It turns out that the value of the suggested invariant is the same for all isomorphic Bruhat intervals, which, knowing all length $n-1$ intervals up to an isomorhism, allows us to describe length $n$ Bruhat intervals up to an isomorhism.

\subsection{Invariant construction}

Suppose that for every $ k < n $, a set  $I_k$ was constructed, consisting of representatives from isomorphism equivalency classes of Bruhat intervals of length $k$, one from each. Suppose also that for each interval of the length $ k < n $ in $W$, an isomorphism $\varphi_{[u,v]}$ is fixed into one of the intervals from $I_k$.

Let us introduce three maps $T$, $P$ and $A$ on the set on intervals of length $n$ such that the image of $[u,v]$ is defined as follows:

Let us label the elements of the set $\{ w \in W \ | u < \ w \lhd v \}$ as $v_1,\dots,v_n$ and fix the set of automorphisms $\psi = (\psi_1, \dots, \psi_n)$, where $\psi_i$ is an automorphism of  the interval $[u,v_i]$. Denote the maps $\varphi_{[u,v_i]}$ as $\varphi_i$ and the maps $\varphi_{[u,v_i]} \circ \psi_i$ as $\Theta_i$. 

Now let us define the image of $T$ as the set of numbers of intervals in $I_{n-1}$ isomorphic to $[(u,v_i)]$, repetitions included.

Define the map $P_{\psi}$ as taking an interval $[u,v]$ to the set $\{ (i,j,\Theta_i(w),\Theta_j(w))| 1 \leq i<  j \leq n, w \lhd v_i,v_j\}.$

Denote $\tau_{(u,v),w}^{\psi} = \varphi_{[\Theta_i(u), \Theta_i(w)]} \circ
\Theta_i | _{[u,w]} \circ \Theta_j^{-1}| _{\Theta_j([u,w])} \circ \varphi^{-1}_{[\Theta_j(u),\Theta_j(w)])} $.

The map $\tau_{(u,v),w}^{\psi}$ is well-defined, because, as follows from Corollary \ref{diam}, $v_i$ and $v_j$ are uniquely defined up to a permutation by $w$. Note that $\tau_{(u,v),w}^{\psi}$ is an automorphism of the interval $\varphi_{[u,w]}([u,w])$. 

Define the map $A_{\psi}$ as taking an interval $[u,v]$ into the set $\{(i,\Theta_i(w), \tau_{(u,v),w}^{\psi}
| 1 \leq i \leq n, w \lhd v_i\}.
$

Let us unite the constructed maps together into the map 
 $TPA$ such that  $$TPA([u,v]) = \{(T([u,v]),P_{\psi}([u,v]),A_{\psi}([u,v]))|\psi \text{ - set of automorphisms}, \ \psi_i \in Aut([u,v_i]) \}.$$ 
 Note that the $TPA$ map does not depend on the choice of $\psi_i$, but does depend on the way we enumerated the elements of the set $\{ w \in W \ | u < \ w \lhd v \}$.

To illustrate this construction, let us give an example.
The picture below presents a commutative diagram, showing the relation between $\Theta_i$ and $\tau_{(u,v),w}^{\psi}$ for a particular interval of length 3. To make the diagram more readable, the edges of the graph are subscribed in black and the maps are subscribed in blue. The subinterval $[u,w]$ and its images and preimages are marked by filled-in points for the vertices and solid lines for the edges.

$\xymatrix @=3.5pc @!0 {
&{\bullet} \ar@{-}[d] & {\varphi_{[u,w]}(w) \ \ \ \ \ \ }  &&&&& {\bullet}& {\varphi_{[u,w]}(u) \ \ \ \ \ \ }\\
&{\bullet} \ar[dd]^{\color{blue}\varphi_{[u,w]^{-1}}} & {\varphi_{[u,w]}(u) \ \ \ \ \ \ } & \ar[rrr]^{\color{blue}\tau_{(u,v),w}^{\psi}}&&&& {\bullet} \ar@{-}[u]& {\varphi_{[u,w]}(u) \ \ \ \ \ \ }\\
\\
& {\circ} & {\varphi_3(v_3) \ \ \ \ \ \ \ \ \ \ } &&&&& {\circ} \ar@{--}[dr] \ar[uu]^{\color{blue}\varphi_{[u,w]}} & {\varphi_1(v_1)  \ \ \ \ \ \ \ \ \ \ }\\
{\circ}\ar@{--}[ur] & & {\bullet} \ar@{--}[ul] & {\psi_3(\varphi_3(w))\ \ \ \ } &&&{\circ} \ar@{--}[ur]&&{\bullet} \ar@{-}[dl]&{\psi_1(\varphi_1(w)) \ \ \ \ \ \ }\\
& {\bullet} \ar@{-}[ur] \ar@{--}[ul] \ar[dd]^{ \color{blue} \psi_3^{-1}}& {\varphi_3(u)  \ \ \ \ \ \ \ \ \ \ } &&&&&{\bullet} \ar@{--}[ul]&{\varphi_1(u)  \ \ \ \ \ \ \ \ \ \ }\\
\\
& {\circ} & {\varphi_3(v_3) \ \ \ \ \ \ \ \ \ \ } &&&&& {\circ} \ar@{--}[dr] \ar@{--}[dl] \ar[uu]^{\color{blue}\psi_1}&{\varphi_1(v_1)  \ \ \ \ \ \ \ \ \ \ }\\
{\circ}\ar@{--}[ur] & & {\bullet} \ar@{--}[ul]& {\varphi_3(w)  \ \ \ \ \ \ \ \ \ \ } &&&{\bullet} \ar@{-}[dr] &{\varphi_1(w) \ \ \ \ \ \ \ }& {\circ} \ar@{--}[dl]\\
& {\bullet} \ar@{-}[ur] \ar@{--}[ul] \ar[ddrr]^{\color{blue}\varphi_3^{-1}}& {\varphi_3(u)  \ \ \ \ \ \ \ \ \ \ } &&&&&{\bullet}&{\varphi_1(u)  \ \ \ \ \ \ \ \ \ \ }
\\
\\
&&&&{\circ} \ar@{.}[drr]\ar@{.}[d] \ar@{.}[dll] \ar[-6,-2] _{\color{blue}\Theta_3} \ar[-6,2]^{\ \ \ \ \color{blue}\Theta_1} & {v \ \ \ \ \ \ \  \ \ \ \ \ \ \ \  \ \ar[uurr]^{\color{blue} \varphi_1} } \\
&&{\circ} \ar@{--}[d] \ar@{--}[drr] &{v_1 \ \ \ \ \  \ \ \ \ \ \ \ \  \ \ }& {\circ} \ar@{.}[drr] \ar@{.}[dll] &{v_2 \ \ \ \ \ \ \  \ \ \ \ \ \ \ \ }& {\circ} \ar@{--}[d] \ar@{--}[dll]& {v_3 \ \ \ \ \ \ \  \ \ \ \ \ \ \ \ }\\
&&{\circ} \ar@{--}[drr] && {\bullet} \ar@{-}[d]& {w \ \ \ \ \ \  \ \ \ \ \ \ \ \  \ \  } & {\circ} \ar@{--}[dll] \\
&&&&{\bullet} & {u \ \ \ \ \ \  \ \ \ \ \ \ \ \  \ \  }}$

Let us come back to the informal explanation of the construction. Now it is possible to say exactly what "gluing together" meant in terms of the newly introduced notation. To get an interval isomorphic to $(u,v)$ of length $n$, one can take intervals from $I_{n-1}$ with numbers from $T([u,v])$ and identify the images of each particular vertex under  
every $\varphi_i$ that acts on it. The equivalency on the vertices of length $\ell(v)-2$ is given by $P_{Id}([u,w])$, and as one knows the "standard" isomorphisms from subintervals of length $n-2$ into elements of $I_{n-2}$, the described equivalency can be reconstructed from $A_{Id}([u,v])$.

Thus, $T([u,v]), P_{Id}([u,v]), A_{Id}([u,v])$ uniquely identify the interval $[u,v]$. However, the next part will consider the more general attribute  $TPA$, as it proves useful for comparing intervals. 

\subsection{Properties of $TPA$}
\begin{lemma}\label{techlem1}
Let $s,u,v \in W$, $s < u < v$, $\{v_1,v_2, \dots ,v_n\} = \{w \in W| s < w \lhd v\}$ and $u < v_i,v_j$. Then there exists a sequence $v_i = v_{i_0}, v_{i_1}, \dots, v_{i_k} = v_j$ such that $\forall q \ \exists r \in W :\ u < r \lhd v_{i_q},v_{i_{q+1}} $.
\end{lemma}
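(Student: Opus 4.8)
The plan is to recognize that the statement is really about the single interval $[u,v]$. Since each step of the desired sequence requires an $r$ with $u<r\lhd v_{i_q},v_{i_{q+1}}$, every term of the sequence lies strictly above $u$; conversely, the elements of $\{v_1,\dots,v_n\}$ that lie above $u$ are exactly the coatoms of $[u,v]$, because they have rank $\ell(v)-1>\ell(u)$ and are $\lhd v$. So the claim is equivalent to saying that the coatoms of $[u,v]$ are connected in the graph whose edges join two coatoms that share a common lower cover $r$ of rank $\ell(v)-2$ (automatically $r>u$ once $\ell(v)-\ell(u)\ge 3$). I would prove this connectivity and then read the sequence off a connecting path. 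The marginal case $\ell(v)-\ell(u)=2$ is handled separately: by Corollary~\ref{diam} the interval $[u,v]$ is then a diamond, so having $v_i,v_j$ both above $u$ forces $v_i=v_j$ and the sequence is taken of length $0$.

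The key tool is the shelling property of Corollary~\ref{shellable}, which I would use to show that the set of maximal chains $\mathcal M(u,v)$ is connected under the adjacency "differ in exactly one element". Enumerate $\mathcal M(u,v)=\{m_1\prec m_2\prec\dots\prec m_M\}$ in the order $\prec$, and argue by induction on $t$ that $\{m_1,\dots,m_t\}$ is connected under this adjacency; the case $t=1$ is trivial. For $t\ge 2$ I apply Corollary~\ref{shellable} with $m=m_t$ and $m'=m_1$ (legitimate since $m_1$ is the $\prec$-minimum): it yields $k\prec m_t$, hence $k\in\{m_1,\dots,m_{t-1}\}$, with $|k\cap m_t|=|m_t|-1$, i.e.\ $k$ is adjacent to $m_t$. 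As $\{m_1,\dots,m_{t-1}\}$ is connected by induction, so is $\{m_1,\dots,m_t\}$, and the full adjacency graph on $\mathcal M(u,v)$ is connected. Note that every maximal chain contains both $u$ and $v$, so two adjacent chains can differ only at an interior rank in $\{\ell(u)+1,\dots,\ell(v)-1\}$.

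With connectivity established I would finish as follows. Extend $v_i$ and $v_j$ to maximal chains $a\ni v_i$ and $b\ni v_j$ of $[u,v]$, and choose a path $a=M_0,M_1,\dots,M_L=b$ of pairwise adjacent chains. For each $p$ let $c_p$ be the element of $M_p$ of rank $\ell(v)-1$, so $c_0=v_i$, $c_L=v_j$, and every $c_p$ is a coatom of $[u,v]$, hence $c_p>u$ and $c_p\in\{v_1,\dots,v_n\}$. Passing from $M_p$ to $M_{p+1}$, either the top rank is unchanged and $c_p=c_{p+1}$, or exactly the top rank changes, in which case the element $r$ of rank $\ell(v)-2$ common to $M_p$ and $M_{p+1}$ satisfies $r\lhd c_p$, $r\lhd c_{p+1}$, and $u<r$ since $\ell(r)=\ell(v)-2>\ell(u)$. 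Deleting repetitions from $c_0,\dots,c_L$ then gives the required sequence $v_i=v_{i_0},\dots,v_{i_k}=v_j$, each consecutive distinct pair being bridged by such an $r$.

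The main obstacle I anticipate is the careful transition from the shelling condition to genuine connectivity of the chain-adjacency graph — especially ensuring that Corollary~\ref{shellable} always supplies an adjacent chain $k$ that is \emph{earlier} in $\prec$, which is precisely what the induction consumes — together with the bookkeeping in the projection step, where one must verify that collapsing runs of equal coatoms preserves a bridging element at each surviving transition. The only genuinely subtle point is the strict inequality $u<r$, which is what necessitates the small-length discussion; everything else is routine once the adjacency graph of $\mathcal M(u,v)$ is known to be connected.
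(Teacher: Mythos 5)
Your main argument coincides with the paper's own proof: the paper likewise extends $v_i$ and $v_j$ to maximal chains of $[u,v]$, uses Corollary~\ref{shellable} to produce a sequence of chains in $\mathcal{M}(u,v)$ in which consecutive chains share all but one element, and then reads off the rank-$(\ell(v)-1)$ entries, the shared rank-$(\ell(v)-2)$ element serving as $r$ whenever the coatom changes. Your induction on initial segments of the $\prec$-order (each $m_t$ is adjacent to some earlier chain, because the $k$ supplied by Corollary~\ref{shellable} satisfies $k\prec m_t$) is exactly the right way to extract connectivity from the corollary; the paper asserts the connecting sequence without spelling this induction out, so on that point your write-up is more complete than the original.

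However, your disposal of the marginal case is wrong. By Corollary~\ref{diam} a length-$2$ interval $[u,v]$ is a diamond with \emph{two distinct} coatoms, and both lie strictly above $u$; so $u<v_i$ and $u<v_j$ does not force $v_i=v_j$ --- two distinct coatoms above $u$ is the generic situation there. In fact, when $\ell(v)-\ell(u)=2$ and $v_i\neq v_j$, the conclusion of the lemma, read with the strict inequality $u<r$, cannot hold at all: any $r\lhd v_{i_q}$ has $\ell(r)=\ell(v)-2=\ell(u)$, so $u<r$ is impossible, and no sequence of any length helps. This is a defect of the statement itself rather than of your main construction --- the paper's proof silently breaks in the same place, since for $m=2$ its element $(C_i)_2$ equals $u$ --- and it is harmless in context, because the lemma is invoked in the proof of the subsequent theorem only to reduce agreement at $x<v_i,v_j$ to the covering case, and when $\ell(v)-\ell(x)=2$ one already has $x\lhd v_i,v_j$, i.e.\ the base case. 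The correct repair is to assume $\ell(v)-\ell(u)\geq 3$ (or to weaken $u<r$ to $u\leq r$), not to claim that the sequence degenerates to length $0$.
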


\begin{proof}
Let us fix a reduced word $v = s_1s_2\dots s_q$. Suppose $v_1 = 
s_1s_2\dots \hat{s_{i_1}} \dots s_q$ and $v_2 =  s_1s_2\dots 
\hat{s_{i_2}} \dots s_q$, without loss of generality $i_1 < i_2$.
Consider the chains $u = u_m \lhd u_{m-1} \lhd \dots \lhd
u_{1} = v_1 \lhd u_0 = v$ and $u = \tilde{u_m} \lhd 
\tilde{u_{m-1}} \lhd \dots \lhd \tilde{u_1} = v_2 \lhd \tilde{u_0}
= v $. By Corollary \ref{shellable}, there exists a sequence $C_0,C_1, \dots,C_p$ of chains from $\mathcal{M}(u,w)$ such that $C_0 = (u_0,u_1, \dots, 
u_m)$, $C_p = (\tilde{u_0},\tilde{u_1}, \dots ,\tilde{u_m}$,  
$\forall i$ $\lambda(C_i) < \lambda(C_{i+1})$ and $\forall i 
|C_i\cap C_{i+1}| = m$. 

Then the chain $v_1 = (C_0)_1,(C_1)_1, \dots, (C_{p-1})_1,
(C_p)_1 = v_2$ satisfies the required condition, because if $(C_i)_1 \neq (C_{i+1})_1$ then  $(C_i)_2 = 
(C_{i+1})_2 \lhd (C_i)_1,(C_{i+1})_1$.
\end{proof}

\begin{theorem}
If $TPA([u,v]) = TPA([u',v'])$ then $[u,v] \cong [u',v']$.
\end{theorem}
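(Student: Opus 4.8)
The plan is to prove the statement by \emph{reconstruction}: I will show that the value $TPA([u,v])$ determines the isomorphism type of $[u,v]$, by giving an explicit recipe that rebuilds $[u,v]$ up to poset isomorphism out of a single triple $(T,P_{\psi},A_{\psi})$ occurring in the set $TPA([u,v])$. Since the identity tuple $\psi = Id$ is always a legal choice of automorphisms, the triple $(T([u,v]),P_{Id}([u,v]),A_{Id}([u,v]))$ lies in $TPA([u,v]) = TPA([u',v'])$, hence it also equals some triple $(T([u',v']),P_{\psi'}([u',v']),A_{\psi'}([u',v']))$ for a suitable automorphism tuple $\psi'$ on $[u',v']$. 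So it will suffice to verify two things: that the reconstruction applied to this common triple yields the correct poset, and that the outcome of the reconstruction does not depend on the choice of $\psi$. I would carry out the whole argument within the inductive setting already in place, assuming the sets $I_{n-1}$, $I_{n-2}$ and the fixed isomorphisms $\varphi_{[\cdot,\cdot]}$ for lengths below $n$ are given.

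First I would reduce the problem to the \emph{punctured} interval. Writing $v_1,\dots,v_n$ for the coatoms, i.e. the elements of $\{w : u < w \lhd v\}$, the Chain property shows that every element of $[u,v]$ other than $v$ lies below some coatom, so that $[u,v]\setminus\{v\} = \bigcup_i [u,v_i]$, and the coatoms are exactly the maximal elements of this punctured poset. It is therefore enough to reconstruct $[u,v]\setminus\{v\}$ up to isomorphism: the top element is then recovered canonically by adjoining a new maximum above all the maximal elements.

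Next I would reconstruct the punctured interval as a gluing of standard models. From $T$ I take, for each $i$, the representative $C_i := \varphi_i([u,v_i]) \in I_{n-1}$, form the disjoint union $\bigsqcup_i C_i$, and then impose identifications. For each pair $(i,j)$ and each common lower cover $w \lhd v_i,v_j$, the pair recorded in $P_{\psi}$ pins down the images $\Theta_i(w)\in C_i$ and $\Theta_j(w)\in C_j$ of the top of the shared subinterval $[u,w]$, while the automorphism $\tau^{\psi}_{(u,v),w}$ recorded in $A_{\psi}$, together with the fixed isomorphisms in $I_{n-2}$ (available by the inductive setup and applied to the known endpoints $\Theta_i(u),\Theta_i(w),\Theta_j(u),\Theta_j(w)$), reconstructs the full gluing isomorphism between the sub-copy of $[u,w]$ in $C_i$ and that in $C_j$. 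Because every such gluing map is a restriction of $\Theta_i\circ\Theta_j^{-1}$, that is, of the genuine identity on $W$ transported through the standard models, these maps automatically satisfy the cocycle/transitivity condition; hence the transitive closure of the identifications is a well-defined equivalence that does not over-identify distinct vertices. Finally, replacing $\varphi_i$ by $\Theta_i = \varphi_i\circ\psi_i$ merely relabels each $C_i$ by an automorphism, which is why the reconstructed poset is independent of $\psi$.

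The crux, and the main obstacle, is to show that these pairwise top-level gluings already generate all identifications: whenever a vertex represents the same element of $W$ in $[u,v_i]$ and in $[u,v_j]$, it must be forced to be identified by a chain of direct gluings. This is exactly where Lemma \ref{techlem1} enters. Given such a vertex $x$, I would apply the lemma with bottom element $u$ and with $x$ playing the role of its middle element; it produces a sequence of coatoms $v_i = v_{i_0},\dots,v_{i_k}=v_j$ in which consecutive ones share a common lower cover $r_q > x$. Since $x \le r_q$ and each $r_q$ is among the covers recorded in $P_{\psi}$, the reconstructed gluings along the subintervals $[u,r_q]$ successively identify the copies of $x$ across the chain, so the generated equivalence is neither too fine nor too coarse. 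Verifying that this connectivity argument meshes precisely with the encoded data is the delicate step; the base cases ($n=1$ trivially, and $n=2$ via Corollary \ref{diam}) are immediate, and the connectivity itself rests on Corollary \ref{shellable} through Lemma \ref{techlem1}.
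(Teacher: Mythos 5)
Your proposal is correct in substance and leans on the same two load-bearing facts as the paper's proof --- Lemma \ref{techlem1}, and the recovery of the overlap isomorphism $\Theta_i|_{[u,w]}\circ\Theta_j^{-1}|_{\Theta_j([u,w])}$ from $\tau^{\psi}_{(u,v),w}$ together with the fixed maps $\varphi_{[\Theta_i(u),\Theta_i(w)]}$ and $\varphi_{[\Theta_j(u),\Theta_j(w)]}$ --- but its architecture is genuinely different. The paper builds no model: picking a shared triple with $P_{\psi}([u,v])=P_{Id}([u',v'])$ and $A_{\psi}([u,v])=A_{Id}([u',v'])$, it defines $f\colon[u,v]\to[u',v']$ directly and piecewise by $f(w)=\Theta_i'^{-1}(\Theta_i(w))$ for any coatom $v_i\geq w$, gets order preservation for free on each $[u,v_i]$, and concentrates all the work in well-definedness on overlaps, which Lemma \ref{techlem1} reduces to the codimension-two case $w\lhd v_i,v_j$, settled there by the $T$-, $P$- and $A$-equalities exactly as in your gluing step. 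You instead formalize the informal ``gluing together'' description preceding the theorem: factor both intervals through a canonical quotient of $\bigsqcup_i C_i$. What your route buys is a genuine reconstruction statement --- the triple determines a canonical model, which is what the computational algorithm of the last subsection implicitly relies on --- and your two side verifications are right: no over-identification, since every generating gluing is a restriction of $\Theta_i\circ\Theta_j^{-1}$ and hence the generated equivalence only relates copies of one and the same element of $W$; and $\psi$-independence, since passing from $\varphi_i$ to $\Theta_i=\varphi_i\circ\psi_i$ conjugates the gluing data by the automorphisms $\varphi_i\psi_i\varphi_i^{-1}$ of the $C_i$. What it costs is bookkeeping the direct map never sees: the order on the quotient (harmless, because by the Chain property every relation $x<y$ with $y<v$ lives inside a single $[u,v_i]$), and one genuine corner, addressed below.

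The corner is $x=u$: the bottoms of the $n$ copies $C_i$ must all be identified, and your appeal to Lemma \ref{techlem1} ``with $x$ playing the role of its middle element'' is out of scope there, since the lemma assumes $s<u<v$ strictly and so only serves vertices strictly above the bottom. In the paper's formulation this case costs nothing, because every $\Theta_i'^{-1}\circ\Theta_i$ carries bottom to bottom, so $f(u)=u'$ is consistent automatically; in your formulation it is a real, if small, obligation, amounting to connectivity of the graph on coatoms whose edges are pairs admitting a common lower cover in $[u,v]$. It is easily discharged: the proof of Lemma \ref{techlem1} never actually uses $s$, and the shelling walk from Corollary \ref{shellable}, run on $\mathcal{M}(u,v)$ with the middle element taken to be $u$ itself, produces at each coatom change a common lower cover $(C_q)_2\geq u$, each of which is recorded in $P_{\psi}$ (the definition of $P_{\psi}$ allows $w=u$, which also covers the length-two case via $u\lhd v_i,v_j$). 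With that one-line patch your argument is complete and proves the theorem.
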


\begin{proof}
Suppose that $u,u',v,v' \in W$, $TPA([u,v]) = TPA([u',v'])$. Then there exists $\psi$ such that $P_{\psi}([u,v]) = P_{Id}([u',v'])$ and $A_{\psi}([u,v]) = A_{Id}([u',v'])$. Denote the maps $\varphi_{[u',v_i']}$ as $\Theta_i'$. 
It is necessary to show that the map $f: [u,v] \to [u',v']$ such that $\forall i \ f: w \to \Theta_i'^{-1}(\Theta_i(w))$ is a Bruhat order isomorphism.
Suppose $u < w_1 < w_2 < v$. Then $\exists i: w_2 \leq v_i$ and $f(w_1) < f(w_2)$ follow from the fact that $\Theta_i'^{-1}\circ \Theta_i$ is a Bruhat order isomorphism.
Now the only thing left to prove is that $f$ is defined correctly. This is equivalent to the statement that any $i,j$ and for any $x < v_i, v_j$ satisfies  $\Theta_i'^{-1}(\Theta_i(x)) =  \Theta_j'^{-1}(\Theta_j(x))$.
Due to Lemma \ref{techlem1}, it is enough to prove this proposition for $w \lhd v_i,v_j$.

From $T([u,v]) = T([u',v'])$ follows $[u,v_i] \cong [u',v_i']$. Therefore,$\Theta_i(u) = \Theta_i'(u')$, $\Theta_i(v_i) = \Theta_i'(v_i')$. From 
$P_{\psi}([u,v]) = P_{Id}([u',v'])$ follows $\exists w' \lhd 
v'_i,v'_j$ such that $\varphi_i(w)= \Theta_i'(w')$ and
$\Theta_j(w)= \Theta_j'(w')$.

Then from $A_{\psi}([u,v]) = 
A_{Id}([u',v'])$ follows $\tau_{(u,v),w}^{\psi} = 
\tau_{(u',v'),w'}^{Id}$. Therefore, 
$$\varphi^{-1}_{[\Theta_i(u),\Theta_i(v_i)]}\tau_{(u,v),w}^{\psi}
\varphi_{[\Theta_j(u),\Theta_j(v_i)]} = 
\varphi^{-1}_{[\Theta_i'(u'),\Theta_i'(v_i')]}\tau_{(u',v'),w'}^
{Id}\varphi_{[\Theta_j'(u'),\Theta_j'(v_i')]},$$

This means that
$\Theta_i | _{[u,w]} \circ \Theta_j^{-1}| _{\Theta_j([u,w])} = 
\Theta_i' | _{[u',w']} \circ \Theta_j'^{-1}| 
_{\Theta_j'([u',w'])}$.

By multiplying by  $\Theta_i | 
_{[u,w]}^{-1}$  on the left and $\Theta_j' | _{[u',w']}$ on the right, one can get 
$\Theta_j'^{-1}\circ \Theta_j|_{[u,w]} = \Theta_i'^{-1}\circ 
\Theta_i|_{[u,w]}$.
\end{proof}

\begin{theorem}\label{intiso}
Suppose $[u,v] \cong [u',v']$. Then there exists an order of elements in $\{w \in W| u < w \lhd v\}$, and $\{w \in W| u' < w \lhd v'\}$ such that $TPA([u,v]) = TPA([u',v'])$.
\end{theorem}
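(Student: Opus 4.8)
The plan is to prove the converse of the previous theorem: given an abstract poset isomorphism $g\colon [u,v]\to[u',v']$, I must exhibit orderings of the coatom sets $\{v_1,\dots,v_n\}$ and $\{v_1',\dots,v_n'\}$ together with a choice of automorphisms $\psi$ making all three components of $TPA$ coincide. The natural starting point is to observe that any isomorphism $g$ of Bruhat intervals must preserve length (it preserves the rank function of the graded poset, hence $\ell(g(w))-\ell(u')=\ell(w)-\ell(u)$), so $g$ maps coatoms to coatoms bijectively. I would use this bijection to transport the enumeration: having fixed any order $v_1,\dots,v_n$ on the coatoms of $[u,v]$, define $v_i' := g(v_i)$, which fixes the order on the coatoms of $[u',v']$. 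This choice is exactly what makes the $T$-component agree, since $g$ restricts to an isomorphism $[u,v_i]\cong[u',v_i']$, so both intervals lie in the same isomorphism class of $I_{n-1}$ and the (ordered) list of class-numbers is identical.

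The heart of the argument is choosing the automorphisms $\psi_i\in\mathrm{Aut}([u,v_i])$ so that the $A$- and $P$-components match. The key identity to engineer is that the ``standard'' maps into $I_{n-1}$ agree along $g$, i.e. that $\Theta_i' := \varphi_{[u',v_i']}$ and $\Theta_i := \varphi_{[u,v_i]}\circ\psi_i$ satisfy $\Theta_i'\circ g|_{[u,v_i]} = \Theta_i$ for each $i$. Because $g|_{[u,v_i]}$ and the two standard isomorphisms $\varphi_{[u,v_i]},\varphi_{[u',v_i']}$ are all isomorphisms onto the same representative interval in $I_{n-1}$, the composite $\varphi_{[u',v_i']}\circ g|_{[u,v_i]}\circ\varphi_{[u,v_i]}^{-1}$ is an automorphism of that representative; pulling it back through $\varphi_{[u,v_i]}$ produces an automorphism of $[u,v_i]$, and I would \emph{define} $\psi_i$ to be exactly this automorphism. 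With this definition the relation $\Theta_i = \Theta_i'\circ g$ holds on $[u,v_i]$ by construction. Since $g$ is a single global map, for any vertex $w$ lying below both $v_i$ and $v_j$ we get $\Theta_i(w)=\Theta_i'(g(w))=\Theta_j'(g(w))=\Theta_j(w)$ whenever $g(w)$ is itself below the corresponding coatoms $v_i',v_j'$; this is precisely the statement that the labelled pairs recorded by $P_\psi([u,v])$ coincide with those of $P_{Id}([u',v'])$, taking $w' := g(w)$.

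For the $A$-component I would verify that the conjugation data $\tau^{\psi}_{(u,v),w}$ agrees with $\tau^{Id}_{(u',v'),w'}$ under $w'=g(w)$. Unwinding the definition, $\tau^{\psi}_{(u,v),w}$ is built from the restrictions $\Theta_i|_{[u,w]}$, $\Theta_j^{-1}$ and the standard maps $\varphi_{[\Theta_i(u),\Theta_i(w)]}$; substituting the identity $\Theta_i=\Theta_i'\circ g$ and its analogue for $j$, every appearance of $g$ on the interior $[u,w]$ cancels in the composite because $g$ matches $\Theta_i'$ and $\Theta_j'$ simultaneously and the standard maps $\varphi_{[\Theta_i(u),\Theta_i(w)]}=\varphi_{[\Theta_i'(u'),\Theta_i'(w')]}$ are literally the same map. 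Hence $\tau^{\psi}_{(u,v),w}=\tau^{Id}_{(u',v'),w'}$, which is the required equality of $A$-data, and collecting $T$, $P$, and $A$ gives $TPA([u,v])=TPA([u',v'])$.

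I expect the main obstacle to be bookkeeping rather than conceptual: one must be careful that the single automorphism $\psi_i$ chosen from the $i$-th coatom interval is consistent with the pairwise gluing data for \emph{all} $j$ simultaneously, and that the index $w'$ matching $w$ is genuinely $g(w)$ uniformly across the different pairs $(i,j)$. The fact that $g$ is a global isomorphism is what forces this compatibility, so the delicate point is to phrase the $\psi_i$-definition entirely in terms of $g$ (so it is the same $g$ feeding every component) and then to check that the two-dimensional-sphere rigidity from Corollary~\ref{diam} guarantees the coatoms over a given $w$ are matched correctly; once that is in place, all three equalities follow formally.
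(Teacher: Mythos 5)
Your proposal is correct and takes essentially the same route as the paper's proof: you transport the coatom enumeration via $v_i' := g(v_i)$, define the automorphisms $\psi_i$ precisely so that the standard maps intertwine with $g$ (the paper writes this as $\varphi_i'\circ\psi_i=\varphi_i\circ f^{-1}$, placing the $\psi_i$ on the primed side where you place them on $[u,v]$ --- immaterial, since $TPA$ ranges over all choices of $\psi$), and then check the $T$-, $P$-, and $A$-components exactly as the paper does. Your closing appeal to Corollary~\ref{diam} is unnecessary for this direction (it is only needed to make $\tau$ well-defined in the construction itself), but this does not affect correctness.
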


\begin{proof}
 Suppose that $f:[u,v] \to [u',v']$ is an isomorphism of Bruhat order and $\{v_1,v_2, \dots, v_n\}$ is an enumeration of elements of $\{w \in W| u < w \lhd v\}$. 
 Let us consider an enumeration of elements of $\{w \in W| u' < w \lhd v'\}$ such that for all $i$'s $v_i' = f(v_i)$. Denote the functions $\varphi_{[u',v'_i]}$ by $\varphi_i'$. Then $[u,v_i] \cong [f(u),f(v_i)] = [u',v'_i]$, and, therefore, $T([u,v]) = T([u',v'])$. 
 
 Since $w \lhd v_i,v_j$, we have $f(w) \lhd v_i',v_j'$, therefore, $P([u,v]) = P([u',v'])$. Because $f$ is a Bruhat order isomorphism, there exists a set of automorphisms $\psi$ such that $\varphi_i'\circ \psi_i = \varphi_i\circ f^{-1}$. Therefore, $\varphi_j' \circ \psi_j\circ(\varphi_i' \circ \psi_i)^{-1} = \varphi_i \circ \varphi_j^{-1}$, in other notation that is $\tau_{(u,v),w}^{Id} = \tau_{(u',v'),f(w)}^{\psi}$. Consequently $A([u,v]) = A_{\psi}([u',v'])$. This gives the required equality $TPA([u,v]) = TPA([u',v'])$.
\end{proof}

\subsection{Applying the construction to computations}

Analogically to the construction of $TPA$, let us build a program for finding all nonisomorphic Bruhat order intervals in some particular Weyl group. The maps $\varphi_{[u,v]}$ are constructed from corresponding maps for smaller length intervals in the same way they are in the proof of Theorem $\ref{intiso}$.

To describe all the Bruhat intervals of length $n$ in the Weyl group $W$, let us first construct $\varphi_{[u,v]}$ for every interval $[u,v]$ for all intervals of length $n-1$ such that $\ell(v) \leq \ell(w_0) - \frac{n}{2}$. Then let us find $TPA_{Id}([u,v])$, and, if it is not in $TPA$ for any interval already in $I_n$, let us add both $[u,v]$ and $[vw_0,uw_0]$ into $I_n$ and find $TPA$ for them to be compared to subsequent intervals. It is sufficient to only go through the intervals starting in the lower half, since $w \mapsto ww_0$ is an antiautomorphism of Bruhat order on $W$.

If for some $[u',v'] \in I_n$ the set $TPA_{Id}([u,v])$ is contained in  $TPA([u',v'])$, i.e. there exist such $\psi$ that $TPA_{Id}([u,v]) = TPA_{\psi}([u',v'])$, then from $\varphi_{[u,v_i]}, \varphi_{[u',v'_i]}$ and $\psi$, one construct  $\varphi_{[u,v]}$.

From Proposition \ref{weylclass} and Theorem \ref{subinterval}, it follows that listing all Bruhat intervals of length $n$ is comprised of listing all Bruhat intervals in the Weyl groups $A_n$, $B_n$, $D_n$,  $E_6$, $E_7$, $E_8$, and $I_2(6)$. For $n<9$, the groups of type $E$ with less then $n$ generators can be excluded from this list.

The computational results obtained using the above-described algorithm for $n = 3$ and $4$ in type A coincide with the results, presented in \cite{Hultman} and \cite{Hultman2}. The computation time was significantly shorter than stated in those papers, which proves the usefulness of the suggested construction.

Appendices A and B list the representatives of equivalency classes of Bruhat intervals, i.e. elements of the sets $I_n$, for Weyl groups of type $A$, $n = 5,6$ and type $B$, $n = 4,5$. The intervals are denoted by the reduced words for their ends, where $s_i$ are canonically enumerated generating reflections.

\section{Computational observation}
During the computation of the results given in the appendices it was noted that for Weyl groups of type $A$ the $T$ part of the $TPA$ invariant is enough to distinguish non-isomorphic subintervals. The same is not true for Weyl groups of type $B$, with counterexamples existing already in $B_4$. Thus we formulate a conjecture, which as of yet is only supported by computational evidence for $n$ up to 6.

\begin{conj}
Let $u,v,u',v' \in A_n$, $u < v$ , $u'<v'$, $\{v_1,v_2, \dots ,v_n\} = \{w \in A_n| u < w \lhd v\}$, $\{v_1',v_2', \dots ,v_n'\} = \{w \in A_n| u' < w \lhd v'\}$. Suppose $[u,v_i]\cong [u',v_i']$ for all $i$, then $[u,v] \cong [u',v']$.
\end{conj}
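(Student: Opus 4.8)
The plan is to induct on the interval length $n$, the base cases of small $n$ being covered by Hultman's classification and the computations cited above, and to reduce the inductive step to constructing a Bruhat order isomorphism directly. The hypothesis $[u,v_i]\cong[u',v_i']$ for all $i$ is precisely the equality $T([u,v])=T([u',v'])$ for a matching enumeration of the coatom sets; so, after relabelling, I fix isomorphisms $g_i\colon[u,v_i]\to[u',v_i']$. The natural candidate for the global map is $f(w)=g_i(w)$ whenever $w\le v_i$, together with $f(v)=v'$. Every $w<v$ lies below some coatom, so $f$ is everywhere defined as soon as it is single-valued; and once single-valued it is automatically an order isomorphism, since any comparable pair $w_1<w_2<v$ lies below a common coatom $v_i$ (so $f(w_1)=g_i(w_1)<g_i(w_2)=f(w_2)$) and the inverse is assembled symmetrically from the $g_i^{-1}$. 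The whole problem therefore collapses to arranging that $g_i$ and $g_j$ agree on the overlap $[u,w]$ whenever $w\le v_i,v_j$.

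I would organise this gluing by the graph $G$ whose vertices are the coatoms $v_1,\dots,v_n$ and which has, for each common lower cover $w\lhd v_i,v_j$, an edge between $v_i$ and $v_j$ labelled by $w$. Lemma \ref{techlem1}, applied with bottom element $u$, says exactly that $G$ is connected and, more usefully, that for any common lower element $x<v_i,v_j$ there is a path from $v_i$ to $v_j$ all of whose edge labels $w$ satisfy $x<w$. Consequently, agreement of the chosen isomorphisms on the corank-two faces $[u,w]$ indexed by the edges of $G$ forces agreement on $[u,x]$ for every common lower element $x$, since $x$ then lies in each face along such a path. It thus suffices to choose the $g_i$ compatibly on the edge faces of $G$. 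Fixing a spanning tree, I would adjust the $g_i$ edge by edge: across an edge labelled $w$ one precomposes $g_j$ with an automorphism of $[u,v_j]$ so that $g_i$ and $g_j$ come to agree on $[u,w]$. Correctability of a single edge is a surjectivity/rigidity statement about the restriction $\mathrm{Aut}([u,v_j])\to\mathrm{Aut}([u,w])$, which I would seek to supply from the inductive hypothesis together with the explicit combinatorics of type $A$ intervals.

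The genuine difficulty, and the step I expect to dominate the proof, is global consistency of these corrections around the cycles of $G$: transporting an automorphism of an edge face once around a closed loop of coatoms produces an accumulated automorphism that must be trivial for a coherent global choice of the $g_i$ to exist. This is a $\check{\mathrm{C}}$ech-type cocycle condition, and it is precisely here that type $A$ must be used in an essential way, for the same gluing fails in type $B$ already in $B_4$. I would therefore try to isolate a rigidity property peculiar to symmetric groups that forces this cocycle to be a coboundary; two plausible forms of such an input are (i) that the automorphism groups of type $A$ intervals restrict to the faces $[u,w]$ coherently enough that every loop of $G$ closes up, and (ii) that the lexicographic labelling $\lambda$ of maximal chains, whose least chain $m_0$ is the unique one with increasing $\lambda(m_0)$, furnishes a canonical, restriction-compatible normal form for type $A$ intervals, so that the $g_i$ may be chosen canonically and are then compatible on overlaps by construction. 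Proving one of these rigidity statements — and pinpointing the feature of type $A$ that its type $B$ analogue lacks — is the crux; by contrast the reductions above are formal and type-independent.
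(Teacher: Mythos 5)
There is nothing in the paper to compare your argument against: the statement is Conjecture~1, which the paper explicitly leaves open, ``as of yet only supported by computational evidence for $n$ up to 6,'' and the paper notes that the analogous statement fails in type $B$ with counterexamples already in $B_4$. Your proposal, by its own admission, is also not a proof. Its formal reductions are sound but essentially reconstruct the paper's $TPA$ machinery: your graph $G$ on coatoms with edges labelled by common lower covers, and the requirement that the $g_i$ agree on overlaps $[u,w]$, are the $P$ and $A$ parts of the invariant, and your use of Lemma~\ref{techlem1} to propagate agreement from corank-two faces to all common lower elements is exactly how the paper uses that lemma in proving that equality of $TPA$ implies isomorphism. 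In the paper's language, what you must produce is a tuple $\psi$ of automorphisms with $P_{\psi}([u,v]) = P_{Id}([u',v'])$ and $A_{\psi}([u,v]) = A_{Id}([u',v'])$ given only $T([u,v]) = T([u',v'])$; the conjecture is precisely that in type $A$ the $T$ part forces the rest, and that is the step you leave unproven.

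Beyond the openly acknowledged crux (the cocycle condition around cycles of $G$), several of your ``formal'' steps also have holes. Correcting $g_j$ across an edge labelled $w$ presupposes that some automorphism of $[u,v_j]$ stabilizes $[u,w]$ and restricts to the required automorphism of $[u,w]$; the restriction map $\mathrm{Aut}([u,v_j]) \to \mathrm{Aut}([u,w])$ you invoke is not even well-defined, since automorphisms permute the corank-two elements, and a correction at $v_j$ may destroy agreement across other edges incident to $v_j$, which the spanning-tree order does not by itself control. Your hypothesis also gives no a priori matching between the edge sets of $G$ and its primed counterpart (no $P$-equality), so the combinatorial shape of the gluing problem on the two sides must itself be derived; relatedly, ``the inverse is assembled symmetrically'' requires backward agreement of the $g_i^{-1}$ on the sets $\{y \mid y \le v_i', v_j'\}$, which does not follow formally from forward agreement without a separate application of the connectivity lemma on the primed side. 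Finally, the induction on $n$ announced at the outset does no work: the inductive hypothesis is invoked only as a hoped-for source of the rigidity statement, with no mechanism connecting length $n-1$ instances of the conjecture to the automorphism-theoretic input you need. In short, the reductions parallel machinery the paper already has, and the type-$A$ rigidity you correctly identify as the crux is the entire open content of the conjecture, unproven both in your proposal and in the paper.
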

A weaker version of this conjecture can be restated in more geometrical terms. 
\begin{conj}
A Bruhat cell in $\mathcal{F}\ell_n$ is determined up to a cell complex isomorphism by the set of cells composing its boundary.
\end{conj}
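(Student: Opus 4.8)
The plan is to prove the geometric conjecture by reducing it to the combinatorial one (the preceding conjecture) and then attacking the latter with the $TPA$-machinery developed above. First I would make the translation precise: the Schubert cells $C_w$ of $\mathcal{F}\ell_n$ are indexed by $w$ in the symmetric group $S_n = A_{n-1}$ (a Weyl group of type $A$), the closure relation is exactly the Bruhat order, and each closed cell carries the regular CW structure whose face poset is the interval $[e,w]$; the codimension-one cells in the boundary of $C_w$ are the $C_{v_i}$ with $v_i \lhd w$, and their closures realize the subintervals $[e,v_i]$. Under this dictionary ``the set of cells composing the boundary'' is precisely the collection of facet-subintervals $[u,v_i]$ taken up to cell-complex (equivalently poset) isomorphism, so the geometric statement becomes: the isomorphism type of $[u,v]$ is determined by the multiset of isomorphism types of its coatom intervals $[u,v_i]$. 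This is the combinatorial conjecture, which I treat for a general interval $[u,v]$ in type $A$.

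Next I would reduce the combinatorial conjecture to a rigidity statement about the gluing data. By the theorem above stating that equality of $TPA$ implies isomorphism, together with its converse Theorem \ref{intiso}, two intervals are isomorphic if and only if, for some enumeration of coatoms and some choice of facet-automorphisms $\psi$, their invariants $TPA$ agree. The hypothesis $[u,v_i] \cong [u',v_i']$ (after matching coatoms) yields exactly $T([u,v]) = T([u',v'])$. Hence it suffices to prove that, in type $A$, agreement of the $T$-part forces agreement of $P$ and $A$: the overlap pattern and the gluing isomorphisms are already pinned down, up to facet-automorphisms, by the isomorphism types of the facets. This is the precise content that fails in type $B$, where the authors report $B_4$ counterexamples, so any proof must use type-$A$-specific structure and cannot proceed by formal manipulation of $TPA$ alone.

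I would then set up an induction on the length $n$, assuming the conjecture for all smaller lengths. The geometric picture to exploit is that $[u,v]$ realizes a regular CW sphere whose facets are the $[u,v_i]$ and in which, by Corollary \ref{diam}, every codimension-two cell $[u,w]$ (with $w \lhd v_i,v_j$) lies in exactly two facets; Lemma \ref{techlem1} together with the shelling of Corollary \ref{shellable} guarantees that the facet-adjacency graph is connected. By the inductive hypothesis each facet is itself rigid, so its boundary ridges carry well-defined isomorphism labels. Reconstruction of $[u,v]$ then amounts to choosing, for each shared ridge $[u,w]$, a gluing isomorphism between its two copies; the set of admissible choices on a single ridge is a torsor under $\mathrm{Aut}([u,w])$. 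The plan is to show that any two systems of choices that both produce a genuine Bruhat interval differ by a coherent family of facet-automorphisms, so that after adjusting $\psi$ the invariants coincide; this is naturally a spanning-tree-then-close-the-cycles argument on the facet-adjacency graph, where along a spanning tree the gluings can be normalized freely and the content is that every cycle closes up consistently.

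The hard part, and the reason the statement is only conjectural, is exactly this cycle-closing step: one must rule out a nontrivial ``monodromy'' in the automorphism groups of the shared lower intervals, and it is precisely such monodromy that the type-$B$ examples realize. I expect the decisive input to be a rigidity property of lower Bruhat intervals in $S_n$ that is false in $B_n$ --- for instance that the automorphism group of each facet acts compatibly enough on its labelled boundary ridges to absorb every discrepancy, or equivalently that the $T$-labelling of ridges already determines the automorphism coset used in each gluing. Establishing this type-$A$ rigidity, most plausibly via the explicit permutation combinatorics (the pattern structure of the pairs $u \leq w$, or properties of the poset $[u,w]$ inherited from parabolic and interval factorizations), is the crux on which the whole argument turns.
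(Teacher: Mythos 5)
There is no proof of this statement to compare against: in the paper it is an open conjecture, offered with computational evidence in type $A$ for $n$ up to $6$ and with the explicit warning that the analogous statement fails in type $B$ (already in $B_4$). Your text, judged on its own, is a research plan rather than a proof, and you concede as much. The sound parts are the reduction steps: the dictionary from Schubert cells in $\mathcal{F}\ell_n$ to Bruhat intervals $[e,w]$ in $S_n$, turning the geometric statement into the combinatorial conjecture (note that you then treat general intervals $[u,v]$, i.e.\ the stronger Conjecture 1, whereas the geometric version only needs $u=e$ --- which is exactly why the paper calls it the weaker version); and the reformulation, via Theorem \ref{intiso} and its companion, that it suffices to show in type $A$ that agreement of the $T$-component forces agreement of $P_\psi$ and $A_\psi$ for some choice of facet automorphisms $\psi$. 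But the decisive step is missing: you correctly set up the facet-adjacency graph (connected by Lemma \ref{techlem1} and Corollary \ref{shellable}, each ridge lying in exactly two facets by Corollary \ref{diam}), normalize gluings along a spanning tree, and then state that one must rule out nontrivial monodromy in $\mathrm{Aut}([u,w])$ around cycles --- and there you stop, naming only candidate inputs (``pattern structure,'' ``parabolic and interval factorizations'') without establishing any. That cycle-closing rigidity \emph{is} the conjecture; identifying it is not proving it, and the $B_4$ counterexamples show it cannot follow from the formal $TPA$ framework alone, so some genuinely type-$A$ argument would have to be supplied and is not.

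A secondary gap: your translation quietly asserts that the closed Schubert cell carries a regular CW structure whose face poset is the interval $[e,w]$, with facets the $[e,v_i]$, $v_i \lhd w$. This is true but far from free --- it rests on shellability of Bruhat intervals (Bj\"orner) and, for the actual Bruhat stratification of the flag variety, on Hersh's proof of the Fomin--Shapiro conjecture --- and in a real write-up it would need citation and a precise statement, since ``the set of cells composing its boundary'' must be interpreted as the multiset of closed boundary facets up to cell-complex isomorphism for the equivalence with Conjecture 1 to hold. In summary: your reductions are correct and consistent with how the paper frames the two conjectures, but the attempt contains no proof of the one step that makes the statement conjectural.
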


\pagebreak
\addcontentsline{toc}{section}{Appendix A}
\section*{Appendix A}
\subsection*{Length 5}
25 distinct intervals

$(s_5s_6s_2s_3s_2s_1, s_5s_6s_2s_3s_4s_5s_4s_2s_3s_2s_1),\\
 (s_3s_4s_1s_2, s_2s_3s_4s_5s_2s_3s_4s_1s_2),\\
 (s_4s_5s_6s_4s_2, s_2s_3s_4s_5s_6s_3s_4s_3s_1s_2),\\
 (s_6s_3s_4s_1s_2, s_6s_3s_4s_5s_2s_3s_4s_3s_1s_2),\\
 (s_6s_3s_4s_3s_1s_2, s_4s_5s_6s_1s_2s_3s_4s_2s_3s_1s_2),\\
 (s_5s_1s_2s_1, s_2s_3s_4s_5s_4s_3s_1s_2s_1),\\
 (s_4s_5s_4s_2, s_4s_5s_6s_2s_3s_4s_5s_3s_4),\\
 (s_4s_5s_6s_4s_1s_2s_1, s_4s_5s_6s_4s_5s_3s_4s_2s_3s_1s_2s_1),\\
 (s_3s_4s_5s_4s_2s_1, s_4s_5s_6s_3s_4s_5s_1s_2s_3s_4s_1),\\
 (s_4s_5s_6s_3s_4s_5s_4, s_1s_2s_3s_4s_5s_6s_2s_3s_4s_5s_4s_1),\\
 (s_3s_4s_5s_6s_4s_5s_1, s_1s_2s_3s_4s_5s_6s_3s_4s_5s_3s_4s_1),\\
 (s_5s_6s_5s_1s_2s_3s_2, s_1s_2s_3s_4s_5s_6s_4s_5s_4s_2s_3s_2),\\
 (s_4s_5s_3s_4s_1s_2s_3, s_4s_5s_6s_3s_4s_5s_2s_3s_4s_1s_2s_3),\\
 (s_5s_2s_3, s_3s_4s_5s_2s_3s_4s_3s_2),\\
 (s_4s_1s_2, s_4s_5s_6s_5s_1s_2s_3s_4),\\
 (s_6s_1s_2s_3s_1, s_5s_6s_1s_2s_3s_4s_5s_4s_1s_2),\\
 (s_5s_3s_2, s_3s_4s_5s_6s_2s_3s_1s_2),\\
 (s_6s_4s_5s_3s_4s_3s_1, s_4s_5s_6s_3s_4s_5s_1s_2s_3s_4s_3s_1),\\
 (s_6s_3s_4s_5s_1s_2s_3, s_6s_1s_2s_3s_4s_5s_2s_3s_4s_1s_2s_3),\\
 (s_3s_4s_5s_6s_4s_5s_2, s_3s_4s_5s_6s_2s_3s_4s_5s_3s_1s_2s_1),\\
 (s_6s_5s_3s_4s_3s_1s_2, s_3s_4s_5s_6s_1s_2s_3s_4s_5s_4s_3s_2),\\
 (s_6s_2s_3s_4s_3s_1, s_2s_3s_4s_5s_6s_5s_3s_4s_2s_3s_1),\\
 (s_5s_2s_3s_4s_3s_1, s_1s_2s_3s_4s_5s_1s_2s_3s_4s_2s_3),\\
 (s_6s_3s_4s_5s_3s_4s_1, s_6s_1s_2s_3s_4s_5s_2s_3s_4s_3s_2s_1),\\
 (s_3s_4s_5s_6s_3s_2, s_1s_2s_3s_4s_5s_6s_1s_2s_3s_1s_2)$
\subsection*{Length 6}
103 distinct intervals

$(s_6s_5, s_6s_2s_3s_4s_5s_4s_3s_2),\\
 (s_6s_4s_1s_2s_1, s_6s_4s_5s_1s_2s_3s_4s_2s_3s_1s_2),\\
 (s_5s_3s_4s_1s_2, s_5s_6s_3s_4s_5s_1s_2s_3s_4s_1s_2),\\
 (s_5s_6s_5s_2, s_2s_3s_4s_5s_6s_5s_4s_3s_2s_1),\\
 (s_4s_5s_3s_4s_1, s_4s_5s_6s_3s_4s_5s_1s_2s_3s_4s_1),\\
 (s_3s_4s_5s_1s_2s_3s_2, s_1s_2s_3s_4s_5s_2s_3s_4s_1s_2s_3s_1s_2),\\
 (s_5s_6s_5s_2s_3s_2s_1, s_2s_3s_4s_5s_6s_4s_5s_4s_2s_3s_1s_2s_1),\\
 (s_5s_6s_5s_2s_3s_2, s_2s_3s_4s_5s_6s_5s_1s_2s_3s_4s_3s_2),\\
 (s_4s_5s_6s_2s_3s_2, s_4s_5s_6s_2s_3s_4s_1s_2s_3s_1s_2s_1),\\
 (s_6s_3s_4s_2s_3s_2, s_6s_3s_4s_5s_2s_3s_4s_1s_2s_3s_1s_2),\\
 (s_6s_4s_2s_3, s_4s_5s_6s_2s_3s_4s_5s_1s_2s_3),\\
 (s_6s_5s_2s_3s_2s_1, s_5s_6s_2s_3s_4s_5s_4s_1s_2s_3s_2s_1),\\
 (s_4s_5s_6s_4s_5, s_4s_5s_6s_2s_3s_4s_5s_3s_4s_3s_2),\\
 (s_4s_5s_4s_1s_2s_1, s_6s_2s_3s_4s_5s_3s_4s_1s_2s_3s_2s_1),\\
 (s_3s_4s_5s_3, s_3s_4s_5s_1s_2s_3s_4s_1s_2s_3),\\
 (s_6s_3s_4s_3s_1, s_4s_5s_6s_3s_4s_5s_1s_2s_3s_4s_1),\\
 (s_5s_4s_1s_2s_3s_1s_2, s_1s_2s_3s_4s_5s_6s_4s_5s_3s_4s_1s_2s_1),\\
 (s_5s_6s_4s_5s_4s_2s_1, s_5s_6s_1s_2s_3s_4s_5s_3s_4s_3s_1s_2s_1),\\
 (s_5s_6s_3, s_2s_3s_4s_5s_6s_4s_1s_2s_3),\\
 (s_5s_2s_3s_4, s_4s_5s_6s_4s_5s_2s_3s_4s_3s_1),\\
 (s_4s_5s_6, s_4s_5s_6s_4s_5s_4s_2s_3s_2),\\
 (s_6s_4s_5s_2s_3s_2, s_4s_5s_6s_2s_3s_4s_5s_4s_1s_2s_3s_2),\\
 (s_2s_3s_4, s_1s_2s_3s_4s_5s_1s_2s_3s_4),\\
 (s_2s_3s_4s_5s_3s_4, s_1s_2s_3s_4s_5s_6s_5s_3s_4s_1s_2s_3),\\
 (s_4s_5s_2s_3s_1s_2, s_1s_2s_3s_4s_5s_2s_3s_4s_2s_3s_1s_2),\\
 (s_2s_3s_4s_2s_3s_1, s_3s_4s_5s_6s_2s_3s_4s_5s_1s_2s_3s_4),\\
 (s_3s_4s_5s_6s_5s_4s_2, s_3s_4s_5s_6s_2s_3s_4s_5s_1s_2s_3s_4s_1),\\
 (s_6s_2s_3s_4s_5s_1s_2, s_2s_3s_4s_5s_6s_5s_4s_1s_2s_3s_1s_2s_1),\\
 (s_6s_2s_3s_4s_5s_4s_2, s_3s_4s_5s_6s_2s_3s_4s_5s_3s_4s_3s_1s_2),\\
 (s_6s_4s_3s_1, s_4s_5s_6s_3s_4s_5s_4s_1s_2s_3),\\
 (s_4s_5s_3s_4s_1, s_4s_5s_6s_3s_4s_5s_1s_2s_3s_4s_2),\\
 (s_6s_3s_4s_5s_1s_2s_1, s_3s_4s_5s_6s_1s_2s_3s_4s_5s_4s_3s_2s_1),\\
 (s_6s_2s_3s_4s_5s_4, s_2s_3s_4s_5s_6s_1s_2s_3s_4s_5s_4s_2),\\
 (s_6s_4s_5s_4s_2s_3s_1, s_3s_4s_5s_6s_5s_2s_3s_4s_1s_2s_3s_2s_1),\\
 (s_1s_2s_3s_4s_2s_3s_1, s_1s_2s_3s_4s_5s_6s_5s_2s_3s_4s_1s_2s_3),\\
 (s_2s_3s_4s_5, s_2s_3s_4s_5s_6s_1s_2s_3s_4s_5),\\
 (s_5s_4s_1s_2s_1, s_2s_3s_4s_5s_1s_2s_3s_4s_3s_2s_1),\\
 (s_5s_6s_2s_3s_4, s_2s_3s_4s_5s_6s_4s_5s_1s_2s_3s_4),\\
 (s_5s_6s_4s_1s_2, s_3s_4s_5s_6s_4s_5s_2s_3s_4s_1s_2),\\
 (s_5s_6s_5s_1s_2s_1, s_1s_2s_3s_4s_5s_6s_4s_5s_4s_3s_2s_1),\\
 (s_4s_5s_3s_1s_2s_1, s_1s_2s_3s_4s_5s_3s_4s_1s_2s_3s_1s_2),\\
 (s_4s_5s_3s_4s_2s_3, s_4s_5s_6s_3s_4s_5s_2s_3s_4s_1s_2s_3),\\
 (s_5s_6s_3s_4s_5s_3s_1, s_5s_6s_1s_2s_3s_4s_5s_2s_3s_4s_2s_3s_1),\\
 (s_5s_3s_4s_2, s_5s_6s_3s_4s_5s_2s_3s_4s_1s_2),\\
 (s_6s_1s_2s_3s_4s_3s_2, s_1s_2s_3s_4s_5s_6s_5s_1s_2s_3s_4s_1s_2),\\
 (s_6s_2s_3s_4s_3s_2, s_2s_3s_4s_5s_6s_5s_1s_2s_3s_4s_2s_3),\\
 (s_5s_6s_4s_5s_3s_4, s_5s_6s_4s_5s_3s_4s_1s_2s_3s_1s_2s_1),\\
 (s_5s_6s_3s_4, s_3s_4s_5s_6s_4s_5s_2s_3s_4s_3),\\
 (s_6s_4s_5s_2s_3, s_4s_5s_6s_2s_3s_4s_5s_2s_3s_4s_3),\\
 (s_3s_4s_5s_1, s_3s_4s_5s_6s_2s_3s_4s_5s_1s_2),\\
 (s_4s_5s_6s_2s_3s_4, s_2s_3s_4s_5s_6s_3s_4s_5s_1s_2s_3s_4),\\
 (s_5s_6s_2s_3s_4, s_2s_3s_4s_5s_6s_3s_4s_5s_2s_3s_4),\\
 (s_6s_2s_3s_4s_5s_3s_1, s_2s_3s_4s_5s_6s_3s_4s_5s_4s_1s_2s_3s_1),\\
 (s_6s_1s_2, s_4s_5s_6s_4s_5s_4s_1s_2s_1),\\
 (s_6s_3s_4s_3, s_4s_5s_6s_3s_4s_5s_2s_3s_4s_3),\\
 (s_5s_3s_1s_2s_1, s_3s_4s_5s_3s_4s_1s_2s_3s_1s_2s_1),\\
 (s_3s_4s_5s_2s_3s_1, s_3s_4s_5s_6s_2s_3s_4s_5s_1s_2s_3s_4),\\
 (s_6s_5s_4s_3s_1, s_6s_3s_4s_5s_1s_2s_3s_4s_2s_3s_1),\\
 (s_5s_6s_4s_5s_3s_4s_1, s_4s_5s_6s_4s_5s_1s_2s_3s_4s_2s_3s_2s_1),\\
 (s_5s_6s_2s_3s_4s_5s_4, s_4s_5s_6s_2s_3s_4s_5s_2s_3s_4s_3s_1s_2),\\
 (s_3s_4, s_2s_3s_4s_5s_2s_3s_4s_2),\\
 (s_5s_6s_5s_2s_3s_2, s_2s_3s_4s_5s_6s_3s_4s_5s_4s_2s_3s_2),\\
 (s_5s_6s_5s_2s_3s_1s_2, s_5s_6s_2s_3s_4s_5s_3s_4s_1s_2s_3s_1s_2),\\
 (s_4s_5s_3s_4s_1s_2, s_4s_5s_6s_3s_4s_5s_1s_2s_3s_4s_1s_2),\\
 (s_6s_4s_5s_3s_4s_3s_1, s_3s_4s_5s_6s_2s_3s_4s_5s_2s_3s_4s_3s_1),\\
 (s_6s_4s_5s_4s_1s_2s_1, s_4s_5s_6s_1s_2s_3s_4s_5s_4s_3s_1s_2s_1),\\
 (s_4s_5s_3s_4, s_4s_5s_6s_3s_4s_5s_2s_3s_4s_1),\\
 (s_6s_3s_4s_5s_3s_2, s_2s_3s_4s_5s_6s_2s_3s_4s_5s_4s_3s_2),\\
 (s_5s_6s_5s_4s_2s_1, s_3s_4s_5s_6s_2s_3s_4s_5s_3s_4s_2s_1),\\
 (s_4s_5s_6s_4s_5s_4s_3, s_2s_3s_4s_5s_6s_3s_4s_5s_3s_4s_1s_2s_3),\\
 (s_6s_4s_2s_3, s_4s_5s_6s_5s_2s_3s_4s_1s_2s_3),\\
 (s_5s_1s_2s_3s_2, s_5s_6s_1s_2s_3s_4s_5s_2s_3s_1s_2),\\
 (s_6s_3s_4s_5s_3s_1s_2, s_6s_1s_2s_3s_4s_5s_2s_3s_4s_2s_3s_1s_2),\\
 (s_6s_4s_5s_1s_2s_1, s_2s_3s_4s_5s_6s_4s_5s_4s_3s_1s_2s_1),\\
 (s_3s_4s_5s_6s_3s_4, s_1s_2s_3s_4s_5s_6s_3s_4s_5s_2s_3s_1),\\
 (s_4s_5s_6s_4s_5s_4s_2, s_2s_3s_4s_5s_6s_3s_4s_5s_3s_4s_1s_2s_1),\\
 (s_6s_4s_1s_2s_1, s_4s_5s_6s_5s_1s_2s_3s_4s_1s_2s_1),\\
 (s_2s_3s_4s_5s_3s_1s_2, s_2s_3s_4s_5s_6s_1s_2s_3s_4s_5s_4s_2s_3),\\
 (s_5s_6s_5s_1s_2s_3s_2, s_1s_2s_3s_4s_5s_6s_4s_5s_3s_4s_2s_3s_2),\\
 (s_6s_4s_5s_4s_1s_2s_1, s_4s_5s_6s_3s_4s_5s_2s_3s_4s_3s_1s_2s_1),\\
 (s_5s_4s_1, s_4s_5s_6s_4s_5s_2s_3s_4s_1),\\
 (s_5s_6s_4s_5s_2s_3s_1, s_2s_3s_4s_5s_6s_4s_5s_3s_4s_1s_2s_3s_1),\\
 (s_5s_1s_2s_3s_4s_2s_1, s_4s_5s_6s_2s_3s_4s_5s_1s_2s_3s_4s_2s_1),\\
 (s_4s_5s_6s_4s_2s_3s_1, s_2s_3s_4s_5s_6s_2s_3s_4s_1s_2s_3s_2s_1),\\
 (s_6s_4s_5s_3s_1, s_4s_5s_6s_3s_4s_5s_4s_1s_2s_3s_1),\\
 (s_5s_1s_2s_3s_4s_1, s_5s_6s_2s_3s_4s_5s_1s_2s_3s_4s_2s_1),\\
 (s_5s_6s_5s_2s_3s_4s_1, s_2s_3s_4s_5s_6s_3s_4s_5s_1s_2s_3s_4s_2),\\
 (s_5s_6s_5s_4s_1s_2s_1, s_5s_6s_1s_2s_3s_4s_5s_3s_4s_2s_3s_2s_1),\\
 (s_4s_5s_3s_4s_3s_1, s_4s_5s_6s_3s_4s_5s_2s_3s_4s_1s_2s_3),\\
 (s_4s_5s_6s_4s_2s_3, s_1s_2s_3s_4s_5s_6s_3s_4s_1s_2s_3s_1),\\
 (s_4s_5s_6s_4s_5s_4s_2, s_1s_2s_3s_4s_5s_6s_2s_3s_4s_5s_4s_1s_2),\\
 (s_3s_4s_5s_4s_3s_1s_2, s_5s_6s_1s_2s_3s_4s_5s_3s_4s_2s_3s_1s_2),\\
 (s_5s_6s_5s_3s_1, s_1s_2s_3s_4s_5s_6s_5s_4s_3s_2s_1),\\
 (s_5s_4s_1s_2s_3, s_5s_6s_3s_4s_5s_2s_3s_4s_1s_2s_3),\\
 (s_4s_5s_4s_1s_2s_1, s_4s_5s_6s_3s_4s_5s_3s_4s_2s_3s_1s_2),\\
 (s_5s_3s_4s_3, s_5s_6s_4s_5s_3s_4s_1s_2s_3s_1),\\
 (s_4s_5s_6s_5s_2s_3, s_2s_3s_4s_5s_6s_3s_4s_5s_4s_1s_2s_3),\\
 (s_6s_3s_4s_2, s_3s_4s_5s_6s_4s_5s_3s_4s_3s_2),\\
 (s_6s_3s_4s_3s_1, s_3s_4s_5s_6s_5s_2s_3s_4s_2s_3s_1),\\
 (s_5s_6s_2s_3s_4s_5s_4s_1s_2s_1,
  s_2s_3s_4s_5s_6s_1s_2s_3s_4s_5s_3s_4s_3s_1s_2s_1),\\
 (s_5s_6s_5s_1s_2s_3s_1s_2s_1, s_1s_2s_3s_4s_5s_6s_4s_5s_4s_1s_2s_3s_1s_2s_1),\\
 (s_5s_6s_5s_2s_3s_4s_1s_2s_1, s_2s_3s_4s_5s_6s_4s_5s_1s_2s_3s_4s_3s_1s_2s_1)$
\addcontentsline{toc}{section}{Appendix B}
\section*{Appendix B}
\subsection*{Length 4}
17 distinct intervals

 $(s_5s_4s_5s_4s_1s_2s_3s_1s_2, s_4s_5s_1s_2s_3s_4s_5s_3s_4s_2s_3s_1s_2),\\
 (s_5s_3s_4s_5s_4s_1s_2s_3s_2s_1, s_5s_3s_4s_5s_1s_2s_3s_4s_5s_3s_4s_2s_3s_1),\\
 (s_5s_1s_2s_3s_4s_5s_3s_2, s_5s_1s_2s_3s_4s_5s_3s_4s_2s_3s_1s_2),\\
 (s_5s_4s_5s_2s_3s_4s_3s_1s_2, s_5s_4s_5s_2s_3s_4s_5s_1s_2s_3s_4s_5s_2),\\
 (s_5s_2s_3s_4s_5s_2s_3s_2, s_5s_3s_4s_5s_2s_3s_4s_5s_4s_2s_3s_2),\\
 (s_5s_2s_3s_4s_5s_3s_4, s_5s_4s_5s_2s_3s_4s_5s_4s_2s_3s_2),\\
 (s_4s_5s_2s_3s_4s_5s_2s_3, s_4s_5s_3s_4s_5s_2s_3s_4s_5s_1s_2s_3),\\
 (s_4s_5s_3s_4s_5s_4s_1s_2, s_3s_4s_5s_2s_3s_4s_5s_2s_3s_4s_1s_2),\\
 (s_4s_5s_3s_4s_5s_4s_1s_2, s_3s_4s_5s_1s_2s_3s_4s_5s_2s_3s_4s_2),\\
 (s_5s_3s_4s_5s_4s_3s_1, s_5s_1s_2s_3s_4s_5s_2s_3s_4s_3s_2),\\
 (s_4s_5s_2s_3s_4s_3s_1s_2, s_4s_5s_3s_4s_5s_2s_3s_4s_2s_3s_1s_2),\\
 (s_5s_2s_3s_4s_5s_4s_1s_2s_3, s_2s_3s_4s_5s_1s_2s_3s_4s_5s_3s_4s_2s_3),\\
 (s_5s_4s_5s_2s_3s_4s_5s_4, s_5s_4s_5s_2s_3s_4s_5s_4s_2s_3s_1s_2),\\
 (s_5s_4s_5s_3s_4s_5s_3s_4s_3s_1, s_4s_5s_3s_4s_5s_1s_2s_3s_4s_5s_2s_3s_4s_1),\\
 (s_5s_4s_5s_2s_3s_1s_2s_1, s_5s_3s_4s_5s_2s_3s_4s_5s_2s_3s_1s_2),\\
 (s_3s_4s_5s_2s_3s_4s_5s_3s_4, s_5s_4s_5s_3s_4s_5s_2s_3s_4s_5s_2s_3s_4),\\
 (s_5s_2s_3s_4s_5s_2s_3s_2, s_5s_2s_3s_4s_5s_1s_2s_3s_4s_5s_2s_3)$
\subsection*{Length 5}
143 distinct intervals

$(s_4s_5s_3s_4s_5s_1s_2s_3s_4, s_4s_5s_3s_4s_5s_1s_2s_3s_4s_5s_1s_2s_3s_1),\\
 (s_5s_4s_5s_2s_3s_4s_3s_1, s_4s_5s_2s_3s_4s_5s_1s_2s_3s_4s_2s_3s_1),\\
 (s_4s_5s_2s_3s_4s_5s_1s_2s_3s_2, s_2s_3s_4s_5s_1s_2s_3s_4s_5s_2s_3s_4s_1s_2s_3),\\
 (s_5s_2s_3s_4s_5s_2s_3s_2, s_5s_3s_4s_5s_2s_3s_4s_5s_3s_4s_2s_3s_2),\\
 (s_5s_3s_4s_5s_4s_1s_2s_1, s_3s_4s_5s_1s_2s_3s_4s_5s_3s_4s_1s_2s_1),\\
 (s_3s_4s_5s_2s_3s_2, s_3s_4s_5s_2s_3s_4s_5s_1s_2s_3s_2),\\
 (s_4s_5s_2s_3s_4, s_4s_5s_2s_3s_4s_5s_1s_2s_3s_4),\\
 (s_5s_4s_5s_1s_2s_3s_4s_3s_2s_1, s_4s_5s_3s_4s_5s_1s_2s_3s_4s_5s_2s_3s_4s_3s_1),\\
 (s_4s_5s_4s_3s_2, s_4s_5s_2s_3s_4s_5s_3s_4s_2s_3),\\
 (s_4s_5s_2s_3s_4s_5s_4s_1s_2s_1, s_2s_3s_4s_5s_1s_2s_3s_4s_5s_2s_3s_4s_1s_2s_3),\\
 (s_5s_4s_1s_2s_3s_1s_2s_1, s_5s_4s_5s_2s_3s_4s_5s_4s_1s_2s_3s_1s_2),\\
 (s_3s_4s_5s_2s_3s_2, s_4s_5s_3s_4s_5s_2s_3s_4s_5s_2s_3),\\
 (s_1s_2s_3s_4s_5s_1s_2s_3s_2, s_4s_5s_2s_3s_4s_5s_1s_2s_3s_4s_5s_4s_3s_2),\\
 (s_1s_2s_3s_4s_5s_1s_2s_3s_2s_1, s_5s_2s_3s_4s_5s_1s_2s_3s_4s_5s_4s_2s_3s_2s_1),\\
 (s_4s_5s_2s_3s_4s_5s_2s_3, s_4s_5s_3s_4s_5s_2s_3s_4s_5s_4s_1s_2s_3),\\
 (s_5s_3s_4s_5s_1s_2s_3s_2s_1, s_5s_3s_4s_5s_1s_2s_3s_4s_5s_4s_2s_3s_2s_1),\\
 (s_5s_2s_3s_4s_5s_2s_3s_1s_2s_1, s_5s_4s_5s_3s_4s_5s_2s_3s_4s_5s_4s_3s_1s_2s_1),\\
 (s_4s_5s_2s_3s_4s_5s_3s_4s_3s_1, s_4s_5s_3s_4s_5s_2s_3s_4s_5s_2s_3s_4s_2s_3s_1),\\
 (s_1s_2s_3s_4s_1s_2s_3s_1s_2s_1, s_4s_5s_2s_3s_4s_5s_1s_2s_3s_4s_5s_4s_3s_1s_2),\\
 (s_5s_3s_4s_5s_4s_3s_1s_2s_1, s_5s_3s_4s_5s_2s_3s_4s_5s_3s_4s_2s_3s_1s_2),\\
 (s_5s_4s_5s_4s_2, s_4s_5s_2s_3s_4s_5s_4s_3s_1s_2),\\
 (s_5s_4s_5s_4s_1s_2s_3s_1s_2, s_5s_4s_5s_1s_2s_3s_4s_5s_4s_1s_2s_3s_1s_2),\\
 (s_4s_5s_2s_3s_4s_5s_2s_3s_4, s_4s_5s_3s_4s_5s_2s_3s_4s_5s_1s_2s_3s_4s_1),\\
 (s_4s_5s_3s_4, s_4s_5s_3s_4s_5s_2s_3s_4s_3),\\
 (s_5s_3s_4s_5s_4s_1s_2s_3s_1, s_3s_4s_5s_1s_2s_3s_4s_5s_1s_2s_3s_4s_2s_3),\\
 (s_5s_2s_3s_4s_2s_3s_2, s_5s_3s_4s_5s_2s_3s_4s_5s_1s_2s_3s_4),\\
 (s_5s_4s_5s_3s_4s_5s_4s_1s_2s_1, s_5s_3s_4s_5s_2s_3s_4s_5s_1s_2s_3s_4s_5s_1s_2),\\
 (s_5s_4s_5s_2s_3s_4s_5s_3s_4s_3, s_4s_5s_2s_3s_4s_5s_1s_2s_3s_4s_5s_2s_3s_4s_1),\\
 (s_5s_4s_5s_4s_1s_2s_1, s_4s_5s_1s_2s_3s_4s_5s_4s_2s_3s_1s_2),\\
 (s_4s_5s_1s_2s_3s_4s_5s_3s_4s_3, s_4s_5s_3s_4s_5s_1s_2s_3s_4s_5s_1s_2s_3s_4s_3),\\
 (s_1s_2s_3s_4s_5s_4s_2s_1, s_4s_5s_3s_4s_5s_1s_2s_3s_4s_5s_3s_4s_1),\\
 (s_5s_3s_4s_5s_4s_1s_2s_3s_2, s_3s_4s_5s_2s_3s_4s_5s_3s_4s_1s_2s_3s_1s_2),\\
 (s_5s_1s_2s_3s_4s_2s_3s_1, s_5s_3s_4s_5s_2s_3s_4s_5s_1s_2s_3s_4s_1),\\
 (s_5s_2s_3s_4s_5s_1s_2s_3s_1s_2, s_5s_2s_3s_4s_5s_1s_2s_3s_4s_5s_3s_4s_1s_2s_3),\\
 (s_5s_2s_3s_4s_5s_4s_1s_2s_3s_1, s_5s_2s_3s_4s_5s_1s_2s_3s_4s_5s_3s_4s_1s_2s_3),\\
 (s_5s_4s_5s_4s_1s_2s_1, s_4s_5s_1s_2s_3s_4s_5s_3s_4s_1s_2s_1),\\
 (s_4s_5s_2s_3s_4s_5s_2s_3s_2s_1, s_4s_5s_3s_4s_5s_2s_3s_4s_5s_4s_2s_3s_1s_2s_1),\\
 (s_4s_5s_1s_2s_3s_4s_5s_3s_1, s_4s_5s_2s_3s_4s_5s_1s_2s_3s_4s_5s_4s_2s_3),\\
 (s_5s_2s_3s_4s_5s_3s_4s_2, s_5s_4s_5s_2s_3s_4s_5s_1s_2s_3s_4s_1s_2),\\
 (s_5s_3s_4s_5s_1s_2s_3s_1, s_4s_5s_3s_4s_5s_2s_3s_4s_5s_1s_2s_3s_1),\\
 (s_4s_5s_1s_2s_3s_4s_1s_2s_3, s_4s_5s_1s_2s_3s_4s_5s_1s_2s_3s_4s_1s_2s_3),\\
 (s_3s_4s_5s_2s_3s_2s_1, s_3s_4s_5s_2s_3s_4s_5s_1s_2s_3s_4s_1),\\
 (s_5s_4s_5s_3s_4s_5s_3s_4s_1s_2, s_4s_5s_3s_4s_5s_1s_2s_3s_4s_5s_2s_3s_4s_1s_2),\\
 (s_3s_4s_5s_1s_2s_3s_4s_2s_3s_2, s_3s_4s_5s_1s_2s_3s_4s_5s_1s_2s_3s_4s_2s_3s_2),\\
 (s_4s_5s_3s_4s_5s_4s_1s_2, s_4s_5s_3s_4s_5s_1s_2s_3s_4s_5s_2s_3s_4),\\
 (s_5s_3s_4s_5s_4s_1s_2s_3s_2, s_3s_4s_5s_1s_2s_3s_4s_5s_3s_4s_2s_3s_1s_2),\\
 (s_3s_4s_5s_2s_3s_4, s_3s_4s_5s_2s_3s_4s_5s_1s_2s_3s_4),\\
 (s_4s_5s_3s_1, s_4s_5s_1s_2s_3s_4s_2s_3s_2),\\
 (s_4s_5s_2s_3s_4s_3s_1s_2, s_4s_5s_2s_3s_4s_5s_2s_3s_4s_2s_3s_1s_2),\\
 (s_5s_4s_5s_4s_1s_2s_1, s_4s_5s_1s_2s_3s_4s_5s_2s_3s_4s_1s_2),\\
 (s_5s_3s_4s_5s_2s_3s_4s_2s_3s_1, s_5s_3s_4s_5s_2s_3s_4s_5s_1s_2s_3s_4s_1s_2s_3),\\
 (s_2s_3s_4s_2s_3s_1s_2s_1, s_4s_5s_2s_3s_4s_5s_3s_4s_2s_3s_1s_2s_1),\\
 (s_5s_4s_5s_4s_1s_2s_3s_2, s_5s_4s_5s_1s_2s_3s_4s_5s_3s_4s_2s_3s_2),\\
 (s_3s_4s_5s_1s_2s_3s_4s_5s_2s_3, s_3s_4s_5s_2s_3s_4s_5s_1s_2s_3s_4s_5s_4s_2s_3),\\
 (s_5s_2s_3s_4s_5s_4s_2s_1, s_2s_3s_4s_5s_1s_2s_3s_4s_5s_3s_4s_2s_1),\\
 (s_5s_1s_2s_3s_4s_5s_1s_2s_3s_1, s_3s_4s_5s_2s_3s_4s_5s_1s_2s_3s_4s_5s_4s_2s_3),\\
 (s_5s_1s_2s_3s_4s_2s_3s_2, s_5s_3s_4s_5s_1s_2s_3s_4s_1s_2s_3s_1s_2),\\
 (s_5s_3s_4s_5s_2s_3s_2s_1, s_5s_3s_4s_5s_2s_3s_4s_5s_1s_2s_3s_2s_1),\\
 (s_5s_3s_4s_5s_4s_2s_3s_1, s_3s_4s_5s_2s_3s_4s_5s_3s_4s_1s_2s_3s_1),\\
 (s_5s_4s_5s_1s_2s_3s_4s_3s_1s_2, s_5s_4s_5s_1s_2s_3s_4s_5s_1s_2s_3s_4s_3s_1s_2),\\
 (s_5s_2s_3s_4s_5s_3s_2, s_5s_4s_5s_2s_3s_4s_5s_4s_2s_3s_1s_2),\\
 (s_3s_4s_5s_2s_3s_4s_5s_1s_2s_3, s_3s_4s_5s_2s_3s_4s_5s_1s_2s_3s_4s_5s_4s_2s_3),\\
 (s_5s_3s_4s_5s_4s_1s_2s_1, s_5s_3s_4s_5s_2s_3s_4s_5s_3s_4s_1s_2s_1),\\
 (s_5s_4s_5s_4s_1s_2s_3s_1, s_4s_5s_1s_2s_3s_4s_5s_2s_3s_4s_1s_2s_3),\\
 (s_5s_1s_2s_3s_4s_2s_3s_2, s_4s_5s_3s_4s_5s_1s_2s_3s_4s_1s_2s_3s_1),\\
 (s_5s_3s_4s_5s_4s_1s_2s_3s_1, s_3s_4s_5s_1s_2s_3s_4s_5s_3s_4s_1s_2s_3s_1),\\
 (s_1s_2s_3s_4s_1s_2, s_5s_1s_2s_3s_4s_5s_3s_4s_3s_1s_2),\\
 (s_5s_2s_3s_4s_5s_1s_2s_1, s_5s_2s_3s_4s_5s_1s_2s_3s_4s_5s_3s_1s_2),\\
 (s_4s_5s_3s_4s_5s_2s_3s_4s_2s_3, s_4s_5s_3s_4s_5s_2s_3s_4s_5s_2s_3s_4s_1s_2s_3),\\
 (s_5s_1s_2s_3s_4s_1s_2s_3, s_2s_3s_4s_5s_1s_2s_3s_4s_5s_3s_4s_2s_3),\\
 (s_5s_1s_2s_3s_1s_2, s_5s_1s_2s_3s_4s_5s_4s_2s_3s_1s_2),\\
 (s_5s_4s_1s_2s_3, s_5s_1s_2s_3s_4s_5s_3s_4s_2s_3),\\
 (s_4s_5s_2s_3s_4s_2s_1, s_4s_5s_2s_3s_4s_5s_4s_2s_3s_1s_2s_1),\\
 (s_4s_5s_3s_4s_5s_2s_3s_4s_2s_1, s_5s_4s_5s_3s_4s_5s_2s_3s_4s_5s_1s_2s_3s_4s_1),\\
 (s_5s_2s_3s_4s_5s_2s_3s_2, s_5s_3s_4s_5s_2s_3s_4s_5s_4s_1s_2s_3s_2),\\
 (s_4s_5s_3s_4s_5s_2s_3s_4s_1, s_4s_5s_3s_4s_5s_2s_3s_4s_5s_1s_2s_3s_4s_1),\\
 (s_1s_2s_3s_4s_5s_2s_3s_2s_1, s_3s_4s_5s_2s_3s_4s_5s_1s_2s_3s_4s_5s_3s_1),\\
 (s_5s_3s_4s_5s_2s_3s_4s_3s_1, s_5s_3s_4s_5s_2s_3s_4s_5s_1s_2s_3s_4s_2s_1),\\
 (s_5s_3s_4s_5s_3s_4s_2s_3s_1, s_5s_3s_4s_5s_2s_3s_4s_5s_2s_3s_4s_1s_2s_3),\\
 (s_5s_2s_3s_4s_5s_2s_3s_4, s_2s_3s_4s_5s_1s_2s_3s_4s_5s_1s_2s_3s_4),\\
 (s_5s_4s_5s_2s_3s_4s_5s_3s_2, s_4s_5s_2s_3s_4s_5s_1s_2s_3s_4s_5s_3s_4s_2),\\
 (s_4s_5s_2s_3s_4s_5s_3s_4s_3s_1, s_4s_5s_2s_3s_4s_5s_1s_2s_3s_4s_5s_2s_3s_4s_1),\\
 (s_5s_4s_5s_3s_4s_5s_3s_4s_3s_1, s_4s_5s_2s_3s_4s_5s_1s_2s_3s_4s_5s_2s_3s_4s_1),\\
 (s_5s_4s_5s_3s_1s_2s_1, s_5s_1s_2s_3s_4s_5s_3s_4s_2s_3s_1s_2),\\
 (s_3s_4s_5s_4s_1s_2s_1, s_5s_3s_4s_5s_1s_2s_3s_4s_5s_3s_4s_1),\\
 (s_3s_4s_5s_2s_3s_4s_2s_1, s_3s_4s_5s_2s_3s_4s_5s_1s_2s_3s_4s_2s_1),\\
 (s_5s_1s_2s_3s_4s_1s_2s_3s_2, s_5s_4s_5s_2s_3s_4s_5s_1s_2s_3s_4s_1s_2s_1),\\
 (s_4s_5s_3s_4s_5s_2s_3s_4, s_4s_5s_3s_4s_5s_2s_3s_4s_5s_1s_2s_3s_4),\\
 (s_5s_3s_4s_5s_2s_3s_4s_3, s_3s_4s_5s_2s_3s_4s_5s_1s_2s_3s_4s_2s_3),\\
 (s_5s_4s_5s_1s_2s_3s_4s_5s_3s_1, s_3s_4s_5s_2s_3s_4s_5s_1s_2s_3s_4s_5s_2s_3s_1),\\
 (s_1s_2s_3s_4s_5s_1s_2s_3s_2s_1, s_4s_5s_2s_3s_4s_5s_1s_2s_3s_4s_5s_4s_1s_2s_3),\\
 (s_1s_2s_3s_4s_1s_2, s_5s_4s_5s_1s_2s_3s_4s_5s_4s_1s_2),\\
 (s_5s_1s_2s_3s_4s_5s_2s_3s_2, s_5s_3s_4s_5s_1s_2s_3s_4s_5s_2s_3s_4s_3s_2),\\
 (s_5s_3s_4s_5s_4s_1s_2s_1, s_2s_3s_4s_5s_1s_2s_3s_4s_5s_3s_4s_2s_1),\\
 (s_4s_5s_3s_4s_1, s_4s_5s_3s_4s_5s_1s_2s_3s_4s_1),\\
 (s_5s_3s_4s_5s_4s_1s_2s_3s_2s_1, s_5s_1s_2s_3s_4s_5s_1s_2s_3s_4s_1s_2s_3s_2s_1),\\
 (s_5s_4s_5s_4s_2s_3s_1, s_4s_5s_2s_3s_4s_5s_3s_4s_1s_2s_3s_1),\\
 (s_4s_5s_4s_2s_3, s_4s_5s_2s_3s_4s_5s_3s_4s_2s_3),\\
 (s_5s_2s_3s_4s_5s_4s_3s_1s_2, s_5s_4s_5s_2s_3s_4s_5s_1s_2s_3s_4s_3s_1s_2),\\
 (s_4s_5s_2s_3s_4s_2, s_4s_5s_3s_4s_5s_2s_3s_4s_3s_1s_2),\\
 (s_5s_4s_5s_1s_2s_3s_4s_2s_3s_1, s_5s_4s_5s_2s_3s_4s_5s_1s_2s_3s_4s_1s_2s_3s_2),\\
 (s_5s_4s_5s_2s_3s_4s_3s_1s_2, s_4s_5s_2s_3s_4s_5s_1s_2s_3s_4s_5s_2s_3s_4),\\
 (s_5s_3s_4s_5s_4s_1s_2s_3s_1s_2, s_3s_4s_5s_2s_3s_4s_5s_1s_2s_3s_4s_2s_3s_1s_2),\\
 (s_4s_5s_1s_2s_3, s_3s_4s_5s_1s_2s_3s_4s_5s_2s_3),\\
 (s_5s_1s_2s_3s_4s_5s_2s_1, s_5s_3s_4s_5s_1s_2s_3s_4s_5s_4s_3s_2s_1),\\
 (s_5s_3s_4s_5s_4s_2s_3, s_3s_4s_5s_2s_3s_4s_5s_3s_4s_1s_2s_3),\\
 (s_5s_1s_2s_3s_4s_5s_2s_3s_2, s_5s_3s_4s_5s_1s_2s_3s_4s_5s_4s_2s_3s_1s_2),\\
 (s_5s_2s_3s_4s_5s_1s_2s_3s_4s_1, s_4s_5s_3s_4s_5s_2s_3s_4s_5s_1s_2s_3s_4s_1s_2),\\
 (s_5s_1s_2s_3s_4s_1s_2s_3s_1, s_5s_2s_3s_4s_5s_1s_2s_3s_4s_1s_2s_3s_2s_1),\\
 (s_5s_3s_4s_5s_4s_2, s_4s_5s_3s_4s_5s_2s_3s_4s_5s_4s_2),\\
 (s_4s_5s_1s_2s_3s_1s_2s_1, s_4s_5s_1s_2s_3s_4s_5s_4s_1s_2s_3s_1s_2),\\
 (s_5s_3s_4s_5s_3s_4s_3s_1s_2s_1, s_4s_5s_1s_2s_3s_4s_5s_2s_3s_4s_2s_3s_1s_2s_1),\\
 (s_5s_1s_2s_3s_4s_5s_4s_1s_2s_1, s_4s_5s_2s_3s_4s_5s_1s_2s_3s_4s_5s_4s_3s_2s_1),\\
 (s_5s_3s_4s_5s_4s_2s_3s_1s_2, s_3s_4s_5s_2s_3s_4s_5s_2s_3s_4s_2s_3s_1s_2),\\
 (s_5s_4s_5s_3s_4s_5s_3s_4s_1s_2, s_5s_3s_4s_5s_2s_3s_4s_5s_3s_4s_1s_2s_3s_1s_2),\\
 (s_5s_4s_5s_3s_4s_5s_3s_4s_2s_1, s_4s_5s_3s_4s_5s_2s_3s_4s_5s_1s_2s_3s_4s_1s_2),\\
 (s_5s_2s_3s_4s_5s_4s_1s_2s_3s_1, s_2s_3s_4s_5s_1s_2s_3s_4s_5s_3s_4s_1s_2s_3s_1),\\
 (s_5s_2s_3s_4s_5s_2s_3s_2, s_5s_3s_4s_5s_2s_3s_4s_5s_4s_2s_3s_2s_1),\\
 (s_4s_5s_2s_3s_4s_1s_2, s_4s_5s_2s_3s_4s_5s_1s_2s_3s_4s_1s_2),\\
 (s_5s_4s_5s_4s_1s_2s_3s_1s_2, s_4s_5s_1s_2s_3s_4s_5s_3s_4s_1s_2s_3s_1s_2),\\
 (s_4s_5s_3s_4s_5s_3s_4s_1, s_4s_5s_3s_4s_5s_1s_2s_3s_4s_5s_2s_3s_4),\\
 (s_5s_2s_3s_4s_5s_1s_2, s_2s_3s_4s_5s_1s_2s_3s_4s_5s_3s_4s_2),\\
 (s_5s_2s_3s_4s_5s_3s_1s_2s_1, s_5s_3s_4s_5s_2s_3s_4s_5s_4s_2s_3s_1s_2s_1),\\
 (s_4s_5s_2s_3s_4s_5s_1s_2, s_4s_5s_2s_3s_4s_5s_1s_2s_3s_4s_5s_1s_2),\\
 (s_5s_2s_3s_4s_5s_2, s_4s_5s_3s_4s_5s_2s_3s_4s_5s_3s_2),\\
 (s_5s_4s_5s_4s_1s_2, s_5s_4s_5s_1s_2s_3s_4s_5s_3s_4s_2),\\
 (s_4s_5s_1s_2s_3s_4s_5s_4s_2s_3, s_4s_5s_2s_3s_4s_5s_1s_2s_3s_4s_5s_4s_1s_2s_3),\\
 (s_3s_4s_5s_1s_2s_3s_2, s_5s_4s_5s_3s_4s_5s_4s_1s_2s_3s_1s_2),\\
 (s_5s_3s_4s_5s_2s_3s_4s_2s_3s_2, s_5s_4s_5s_3s_4s_5s_2s_3s_4s_5s_1s_2s_3s_4s_2),\\
 (s_4s_5s_2s_3s_4s_5s_2s_3, s_4s_5s_3s_4s_5s_2s_3s_4s_5s_1s_2s_3s_4),\\
 (s_5s_3s_4s_2s_3, s_5s_3s_4s_5s_2s_3s_4s_1s_2s_3),\\
 (s_4s_5s_2s_3s_4s_5s_2s_3s_4s_2, s_4s_5s_3s_4s_5s_2s_3s_4s_5s_1s_2s_3s_4s_3s_2),\\
 (s_4s_5s_2s_3s_4s_5s_3s_4s_3s_1, s_3s_4s_5s_2s_3s_4s_5s_1s_2s_3s_4s_5s_2s_3s_4),\\
 (s_5s_4s_5s_2s_3s_4s_5s_3s_4s_1s_2s_3,
  s_4s_5s_2s_3s_4s_5s_1s_2s_3s_4s_5s_2s_3s_4s_1s_2s_3),\\
 (s_5s_1s_2s_3s_4s_5s_2s_3s_4s_1s_2s_1,
  s_5s_3s_4s_5s_1s_2s_3s_4s_5s_1s_2s_3s_4s_3s_1s_2s_1),\\
 (s_5s_3s_4s_5s_2s_3s_4s_5s_3s_4s_1s_2s_3,
  s_3s_4s_5s_2s_3s_4s_5s_1s_2s_3s_4s_5s_2s_3s_4s_1s_2s_3),\\
 (s_5s_4s_5s_1s_2s_3s_4s_5s_1s_2s_3s_1s_2s_1,
  s_5s_4s_5s_2s_3s_4s_5s_1s_2s_3s_4s_5s_4s_1s_2s_3s_1s_2s_1),\\
 (s_5s_4s_5s_2s_3s_4s_5s_3s_4s_1s_2,
  s_4s_5s_2s_3s_4s_5s_1s_2s_3s_4s_5s_2s_3s_4s_1s_2),\\
 (s_5s_1s_2s_3s_4s_5s_1s_2s_3s_1s_2,
  s_5s_2s_3s_4s_5s_1s_2s_3s_4s_5s_4s_1s_2s_3s_1s_2),\\
 (s_5s_4s_5s_1s_2s_3s_4s_5s_1s_2s_3s_2,
  s_5s_4s_5s_2s_3s_4s_5s_1s_2s_3s_4s_5s_4s_1s_2s_3s_2),\\
 (s_5s_1s_2s_3s_4s_5s_1s_2s_3s_1s_2s_1,
  s_5s_2s_3s_4s_5s_1s_2s_3s_4s_5s_4s_1s_2s_3s_1s_2s_1),\\
 (s_5s_2s_3s_4s_5s_1s_2s_3s_4s_5s_1s_2s_3s_1s_2,
  s_5s_3s_4s_5s_2s_3s_4s_5s_1s_2s_3s_4s_5s_3s_4s_1s_2s_3s_1s_2),\\
 (s_5s_3s_4s_5s_2s_3s_4s_5s_3s_4s_1,
  s_3s_4s_5s_2s_3s_4s_5s_1s_2s_3s_4s_5s_2s_3s_4s_1)$
\end{document}